\newcommand{\C}{\mathbb{C}}
\DeclareMathAlphabet{\mathnormal}{OT1}{cmr}{m}{it}
\newtheorem{theorem}{Theorem}[section]
\newtheorem{lemma}[theorem]{Lemma}
\newtheorem{corollary}[theorem]{Corollary}
\newtheorem{conjecture}[theorem]{Conjecture}
\newtheorem{proposition}[theorem]{Proposition}
\newtheorem*{theorem*}{Theorem}
\newtheorem*{corollary*}{Corollary}
\theoremstyle{definition}
\newtheorem{defn}[theorem]{Definition}
\newtheorem{remark}[theorem]{Remark}
\newtheorem{example}[theorem]{Example}
\numberwithin{equation}{section}
\begin{document}

\title[Article Title]{An Inductive Approach to Basepoint-Freeness of Adjoint Series on Irregular Varieties}


\author*[1]{\fnm{Houari} \sur{Benammar Ammar}}\email{benammar\_ammar.houari@courrier.uqam.ca}

\affil*[1]{\orgdiv{Département de Mathématiques}, \orgname{UQAM}, \orgaddress{\street{201, Avenue du Président-Kennedy}, \city{Montréal}, \postcode{H2X 3Y7}, \state{Québec}, \country{Canada}}}

\abstract{In this paper, we show how to prove the basepoint-freeness for linear systems on irregular varieties inductively. For instance, we prove that Fujita's conjecture holds for irregular varieties of dimension  $\mathnormal{n}$ with a nef anticanonical bundle assuming it holds for lower-dimensional varieties and under mild conditions. \\

2020 Mathematics Subject Classification: Primary 14C20, 
 Secondary 14F06, 14F17. \\
 
 ORCID: 0000-0002-4645-8362}

\keywords{Fujita's conjecture, Albanese map, irregular variety, basepoint-free}


\maketitle
\section{Introduction}
\par Let $D$ be a positive divisor on a smooth complex projective irregular variety $X$ of dimension $n$. In this paper, we shall use the Albanese map to prove that the basepoint-freeness problem  for the adjoint linear system $|K_{X} + cD|$, where $c$ is a positive integer, can be reduced to a lower-dimensional cases. 
\par To place matters into context, let us recall that one celebrated problem for the basepoint-freeness of adjoint linear systems is  Fujita's conjecture \cite{fujita}.
\begin{conjecture}[{\cite[Page 167]{fujita}}]\label{fujitaconjecture}
    Let $X$ be a complex smooth projective variety of dimension $n$, and let $D$ be an ample divisor on $X$. Then, the linear system $|K_{X} + mD|$ is basepoint-free for all $m \geq n+1$. 
\end{conjecture}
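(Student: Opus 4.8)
I take this to ask for the inductive statement announced in the abstract: Conjecture~\ref{fujitaconjecture} for an irregular $X$ of dimension $n$ with $-K_{X}$ nef, granting Conjecture~\ref{fujitaconjecture} in dimensions $<n$ and under mild hypotheses (among which I single out: the Albanese map $a\colon X\to A:=\mathrm{Alb}(X)$ is a fibration, and $D=a^{*}H+D'$ with $H$ ample on $A$ and $D'$ ample on $X$). The plan is to push $|K_{X}+mD|$ down along $a$ and recover its base point freeness from two inputs: the inductive hypothesis on the fibres of $a$, and generic vanishing for the relevant pushforward on the abelian variety $A$.

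First, observe that $q:=\dim A>0$ since $X$ is irregular, so $a$ is non-constant; by Zhang's theorem on varieties with nef anticanonical bundle $a$ is surjective, and by the mild hypotheses we may take it to be a fibration. A general fibre $F$ is smooth projective of dimension $d=n-q<n$, its normal bundle in $X$ is trivial over the smooth locus $U\subseteq A$ of $a$, so $\omega_{F}=\omega_{X}|_{F}$, and $D'|_{F}$ is ample. Applying Conjecture~\ref{fujitaconjecture} in dimension $d$ to $(F,D'|_{F})$ gives that $|K_{F}+mD'|_{F}|=|(K_{X}+mD)|_{F}|$ is base point free for every $m\ge d+1$, in particular for every $m\ge n+1$.

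Next, put $\mathcal{E}:=a_{*}\mathcal{O}_{X}(K_{X}+mD)$ and $\mathcal{E}_{0}:=a_{*}\mathcal{O}_{X}(K_{X}+mD')$ on $A$, so that $\mathcal{E}=\mathcal{E}_{0}\otimes\mathcal{O}_{A}(mH)$ by the projection formula. Over $U$, Kawamata–Viehweg vanishing on the fibres gives $R^{i}a_{*}=0$ for $i>0$, hence $\mathcal{E}$ is locally free there, commutes with base change, and $\mathcal{E}\otimes k(t)=H^{0}(F_{t},(K_{X}+mD)|_{F_{t}})$. For $x\in X$ with $t:=a(x)\in U$ the evaluation of $K_{X}+mD$ at $x$ factors as
\[
\begin{aligned}
H^{0}(X,K_{X}+mD)=H^{0}(A,\mathcal{E})&\longrightarrow \mathcal{E}\otimes k(t)=H^{0}\!\big(F_{t},(K_{X}+mD)|_{F_{t}}\big)\\
&\longrightarrow (K_{X}+mD)\otimes k(x),
\end{aligned}
\]
whose last arrow is surjective by the previous step. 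So it suffices to show $\mathcal{E}$ is globally generated (and to propagate this off $U$, see below). For that I would use Pareschi–Popa regularity on $A$: for every $P\in\Pic^{0}(A)$ the projection formula and $R^{>0}a_{*}=0$ give $H^{i}(A,\mathcal{E}_{0}\otimes P)=H^{i}(X,\mathcal{O}_{X}(K_{X}+mD')\otimes a^{*}P)$, and since $mD'+a^{*}P\equiv mD'$ is ample, Kawamata–Viehweg vanishing gives $H^{i}(A,\mathcal{E}_{0}\otimes P)=0$ for all $i>0$ and all $P$; thus $\mathcal{E}_{0}$ is $IT_{0}$, in particular $M$-regular, and the Pareschi–Popa theorem that an $M$-regular sheaf twisted by an ample line bundle is globally generated shows $\mathcal{E}=\mathcal{E}_{0}\otimes\mathcal{O}_{A}(mH)$ is globally generated. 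Combined with the reduction, this yields base point freeness of $|K_{X}+mD|$ for $m\ge n+1$.

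The main obstacle is the passage from the general fibre to all fibres: the identities $R^{>0}a_{*}=0$ and $\mathcal{E}\otimes k(t)=H^{0}(F_{t},\dots)$ are transparent only over $U$, because for a singular or non-reduced fibre $\omega_{F_{t}}\ne\mathcal{O}_{X}(K_{X})|_{F_{t}}$ and Kawamata–Viehweg no longer applies; controlling $A\setminus U$ needs Kollár-type torsion-freeness and vanishing for $R^{i}a_{*}\omega_{X}$ (or an Ohsawa–Takegoshi extension argument), or is exactly where the precise form of the mild hypotheses enters — alternatively one can first check that the base locus of $|K_{X}+mD|$ does not dominate $A$ and then invoke that generic vanishing forces it to be pulled back from $A$. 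A second delicate point is the step from $M$-regularity to honest (not merely continuous) global generation of $\mathcal{E}$, which is precisely what the ample summand $a^{*}H$ in $D$ is there to provide. Everything else — the induction on the fibres and the vanishing $H^{>0}(A,\mathcal{E}_{0}\otimes P)=0$ — is formal.
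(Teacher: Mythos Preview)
Your approach is workable in spirit but misses the structural input that makes the $-K_{X}$ nef case tractable, and your ``mild hypothesis'' is not the paper's.

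The gap you yourself flag---passing from the general fibre to all fibres---is exactly what Cao's theorem eliminates: for a smooth projective $X$ with $-K_{X}$ nef, the Albanese map $a\colon X\to A$ is a \emph{locally trivial fibration} onto $A$, so every fibre is smooth and isomorphic to $F$. This gives $U=A$, the base-change identifications $\mathcal{E}\otimes k(t)\cong H^{0}(F_{t},(K_{X}+mD)|_{F_{t}})$ hold everywhere, and the inductive hypothesis applies to every fibre simultaneously. None of the Koll\'ar torsion-freeness or Ohsawa--Takegoshi contortions you contemplate are needed; invoking Cao is the missing step.

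Second, the paper does \emph{not} assume $D=a^{*}H+D'$ with $H$ ample on $A$. That decomposition is quite restrictive (a primitive ample class need not split this way) and is not what the abstract means by ``mild conditions''. The paper's hypothesis is instead that $|rD|_{F}|$ is base point free for some $1\le r\le\alpha(X)$. The mechanism for upgrading continuous global generation to honest global generation is then multiplicative rather than additive: one shows both $\mathrm{alb}_{*}(K_{X}+(m-r)D)$ and $\mathrm{alb}_{*}(rD)$ satisfy $IT_{0}$ (the latter because $rD-K_{X}$ is ample when $-K_{X}$ is nef), hence are continuously globally generated; combined with base-point-freeness on every fibre this yields that $|K_{X}+(m-r)D+a^{*}p^{\vee}|$ and $|rD+a^{*}p|$ are base point free on $X$ for general $p\in\mathrm{Pic}^{0}(A)$, and multiplying sections cancels the twist to give base-point-freeness of $|K_{X}+mD|$. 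Your route---tensoring an $IT_{0}$ sheaf by an ample line bundle pulled back from $A$ and citing Pareschi--Popa---would also work, but only under your stronger splitting hypothesis on $D$, so it proves a weaker statement than the paper's.
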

This conjecture is trivial in dimension $1$ and was proved by Reider \cite[Theorem 1]{reider} in dimension $2$. For $\dim X = 3$ or $4$, it was proved by Ein-Lazarsfeld \cite[Corollary 2*]{lazarsfeld} and Kawamata \cite[Theorem 4.1]{kawamatafujita}, respectively. Recently, the conjecture was settled in dimension $5$ by Ye and Zhu, see \cite[Page 3]{feizhu}. In higher dimensions, there exist some partial results. For instance, Demailly in \cite[Page 324]{demailly1} and \cite[Theorem 0.2]{demailly2} established certain results using analytic techniques and Monge–Ampère equations, he showed that $2K_{X} + mD$ is very ample if $m \geq 2 + \binom{3n+1}{n}$ for every ample divisor $D$ on $X$. In \cite[Corollary 0.2]{siu}, Angehrn and Siu achieved an important result by utilizing multiplier ideal sheaves, Nadel’s vanishing theorem, and the Ohsawa–Takegoshi extension theorem. Specifically, they proved that Conjecture \ref{fujitaconjecture} holds if $m \geq \frac{n(n+1)}{2}$. Later, Heier \cite[Theorem 1.4]{Heier} improved this bound to $O(n^{\frac{4}{3}})$. Some logarithmic bounds were obtained by Ghidelli and Lacini \cite[Theorem 1.1]{lacini}. Notably, Helmke \cite{helmke1} and \cite[Page 3]{helmke2} proved Conjecture \ref{fujitaconjecture} under stronger numerical conditions on $D$. 
\par Recall that by the classical Castelnuovo-Mumford regularity theorem, if $D$ is ample and globally generated, then Conjecture \ref{fujitaconjecture} holds (\cite[Example 1.8.23]{Lazabook}). \par In this article, we mainly study Fujita’s Conjecture for irregular varieties. We first mention that, in \cite[Section 5]{popapareschi1}, Pareschi and Popa, for instance, obtained some basepoint-freeness results on varieties whose Albanese morphism is finite. In Section 3, we present new inductive results on Fujita’s Conjecture for irregular varieties. The main results are as follows:
\begin{theorem*}[Theorem \ref{maintheorem1}]
Let $X$ be an irregular variety of dimension $n\geq 2$ with Albanese dimension $1 \leq \alpha(X)<n$. Let $X \xrightarrow{f} Z \xrightarrow{u} \operatorname{alb}(X) \subseteq \operatorname{Alb}(X)$ be the Stein factorization of the Albanese morphism $\operatorname{alb}$, and let $F$ be a general fiber of the morphism $f$. Let $D$ be an ample divisor on $X$.
\par If  Conjecture \ref{fujitaconjecture} holds in dimension $<n$, then $|K_{X} + mD + \operatorname{alb}^{*}p|$ has no  basepoint supported on $F$ for all $m \geq n-\alpha(X)+1$ and for general $p \in \operatorname{Pic}^{0}(\operatorname{Alb}(X))$.
\par Additionally, if the following condition is satisfied:
\begin{itemize}
    \item[$(*)$]There exists an integer $r$ with $1\leq r \leq \alpha(X)$ such that $|rD_{|_{F}}|$ is basepoint-free and $rD - K_{X}$ is nef and big,
\end{itemize}
then $|K_{X} + mD|$ has no  basepoint supported on $F$ for all $m \geq n+1$.
\end{theorem*}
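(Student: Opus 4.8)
The plan is to prove both statements by restricting to the general fibre $F$ of the Stein factorization $f\colon X\to Z$, applying the inductive form of Fujita's conjecture on $F$, and then lifting sections back to $X$. Note first that $\dim F=n-\alpha(X)<n$, that $D|_F$ is ample, that $K_X|_F=K_F$ (the normal bundle of a general fibre over the smooth base $Z$ is trivial), and that $\operatorname{alb}^*p|_F\cong\mathcal O_F$ since $F$ is contracted by the Albanese map. Hence $(K_X+mD)|_F=(K_X+mD+\operatorname{alb}^*p)|_F=K_F+mD|_F$, and because $m\ge n+1\ge(n-\alpha(X))+1$ the inductive hypothesis gives that $|K_F+mD|_F|$ is base point free. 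Consequently it is enough to show that the restriction map $H^0(X,K_X+mD)\to H^0(F,K_F+mD|_F)$ is surjective: then $\operatorname{Bs}|K_X+mD|\cap F$ is contained in the empty base locus of the system on $F$, so $|K_X+mD|$ has no base point supported on $F$.

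For the surjectivity I would push forward by $f$: set $\mathcal F:=f_*\bigl(\omega_X\otimes\mathcal O_X(mD)\bigr)$ on $Z$. Since $mD$ is $f$-ample, relative Kawamata--Viehweg vanishing kills the higher $R^if_*$, so $H^0(X,K_X+mD)=H^0(Z,\mathcal F)$, and by cohomology and base change at the general point $z=f(F)$ the restriction to $F$ becomes the evaluation $H^0(Z,\mathcal F)\to \mathcal F\otimes k(z)\cong H^0(F,K_F+mD|_F)$. Thus everything reduces to global generation of $\mathcal F$ at the general point of $Z$, equivalently (through the finite map $Z\to\operatorname{alb}(X)$, using that finite pushforward then pullback surjects onto the original sheaf) to that of $\operatorname{alb}_*\bigl(\omega_X\otimes\mathcal O_X(mD)\bigr)$ on $\operatorname{alb}(X)$. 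By generic vanishing this last sheaf is $M$-regular (here the ampleness of $mD$ is what makes it $M$-regular rather than merely a $GV$-sheaf), hence continuously globally generated, which yields that $\mathcal F\otimes\operatorname{alb}^*p$ is globally generated at the general point for \emph{general} $p\in\operatorname{Pic}^0(\operatorname{Alb}(X))$ — this is part (1), and it is precisely where the hypothesis ``$p$ general'' enters. To remove the twist and reach $p=0$ I would use condition (2): since $rD-K_X$ is nef and big, ordinary Kawamata--Viehweg vanishing (no generic twist needed) gives $H^i\bigl(X,\mathcal O_X(rD)\otimes N\bigr)=0$ for $i>0$ and any nef $N$; feeding this together with the base-point-freeness of $|rD|_F|$ into a Castelnuovo--Mumford--type argument for $\mathcal F$ on $Z$ — bounding the auxiliary twists by an ample $H$ on $Z$ and checking the vanishing $H^i\bigl(X,K_X+mD-i f^*H\bigr)=0$ for $1\le i\le\alpha(X)$ via a Koszul resolution of the fibre — upgrades the generation to hold at $p=0$, giving base point freeness of $|K_X+mD|$ along $F$ for $m\ge n+1$.

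The main obstacle I expect is exactly this last positivity bookkeeping: to make condition (2) do its job one must show that the ``twisted down'' adjoint bundles $\mathcal O_X(K_X+mD-if^*H)$ (equivalently $\mathcal F$ twisted by $\mathcal O_Z(-iH)$) have vanishing higher cohomology, which forces a careful comparison of the pulled-back ample class $f^*H$ against $D$ and against the nef and big class $rD-K_X$; it is here that both halves of condition (2) are genuinely used and that the numerical bound $m\ge n+1=\alpha(X)+\bigl((n-\alpha(X))+1\bigr)$ is tight, the first $\alpha(X)$ ``copies of $D$'' being spent to cut down to the fibre and the remaining $n-\alpha(X)+1$ feeding the inductive Fujita bound on $F$. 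A secondary technical point to treat with care is the cohomology-and-base-change identification $\mathcal F\otimes k(z)\cong H^0(F,K_F+mD|_F)$ for the general fibre, and the passage between global generation statements on $Z$ and on $\operatorname{alb}(X)\subseteq\operatorname{Alb}(X)$ through the finite morphism of the Stein factorization.
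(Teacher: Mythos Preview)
Your treatment of part~(1) is essentially the paper's argument: Kodaira vanishing (absolute and relative) shows that $\operatorname{alb}_*(K_X+mD)$ satisfies $IT$ with index~$0$ (hence is $M$-regular, hence continuously globally generated), the induction hypothesis on $F$ both guarantees this sheaf is nonzero and supplies the base-point-freeness of $|K_F+mD|_F|$, and one then lifts through the finite map $u\colon Z\to\operatorname{alb}(X)$. One small comment: you phrase the lifting as ``surjectivity of $H^0(X,K_X+mD)\to H^0(F,K_F+mD|_F)$'', which is stronger than what continuous global generation actually delivers or than what is needed; the paper is careful to formulate this as surjectivity of the sum of twisted evaluation maps at each $x\in F$, which is exactly what gives the statement for \emph{general} $p$.

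For part~(2), however, your proposed route diverges from the paper and runs into the very obstacle you flag. A Castelnuovo--Mumford argument on $Z$ would require $H^i(X,K_X+mD-if^*H)=0$ for an ample $H$ on $Z$ and $1\le i\le\alpha(X)$, i.e.\ that $mD-if^*H$ be nef and big. But $f^*H$ is only nef and there is no numerical comparison between $D$ and $f^*H$; condition~(2) says nothing about classes pulled back from $Z$, so neither half of it helps you bound $if^*H$ against $mD$. The ``Koszul resolution of the fibre'' does not rescue this, because the terms of that resolution are again twists by powers of $f^*H^\vee$. In short, the bookkeeping you anticipate being difficult is in fact not possible with the given hypotheses.

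The paper avoids this entirely by a multiplication trick. Condition~(2) is used \emph{exactly} as in part~(1), but applied to $rD$ instead of $K_X+mD$: since $rD-K_X$ is nef and big, Kawamata--Viehweg gives that $\operatorname{alb}_*(rD)$ also satisfies $IT$ with index~$0$, and since $|rD|_F|$ is base point free one concludes that $|rD+\operatorname{alb}^*p|$ has no base point on $F$ for general $p$. Now take a general $p$ so that both this and part~(1) for $p^\vee$ hold; at any $x\in F$ choose $s\in H^0(X,rD+\operatorname{alb}^*p)$ and $s'\in H^0(X,K_X+mD+\operatorname{alb}^*p^\vee)$ with $s(x)\ne0$, $s'(x)\ne0$, and observe that $s\cdot s'\in H^0(X,K_X+(m+r)D)$ does not vanish at $x$. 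Thus the twist cancels, and $|K_X+m'D|$ has no base point on $F$ for $m'\ge (n-\alpha(X)+1)+r$, hence for $m'\ge n+1$ since $r\le\alpha(X)$. This is where both halves of condition~(2) are genuinely used and where the bound $m\ge n+1$ becomes sharp --- but via section multiplication, not via regularity on $Z$.
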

\par We can include the case of $X$ of maximal Albanese dimension in Theorem \ref{maintheorem1}. However, we decided to separate this case from the inductive case
(in other words, from the positive-dimensional fiber). Theorem \ref{mainresult3} covers the case of maximal Albanese dimension.
\par In some special situations, such as in the case of varieties with $-K_{X}$ nef, we can prove Conjecture $\ref{fujitaconjecture}$ by induction under mild conditions. Recall that projective varieties with $-K_{X}$ nef are a larger class than Fano varieties. One of the methods to study these varieties is to analyze their Albanese maps if they are irregular. In \cite[Theorem 1.2]{Cao}, Cao proved that for an irregular variety $X$ with $-K_{X}$ nef, the Albanese map $\operatorname{alb}: X \to \operatorname{Alb}(X)$ is a locally trivial fibration. This result was conjectured in \cite[Page 221]{DPS}, many authors contributed 
to solving the problem and proved some partial results, for instance, in \cite[Page 532]{steven} and \cite[Theorem 1]{qizhang1}. 

Further, Cao and Höring \cite[Theorem 1.4]{caohoring} proved a decomposition structure theorem for varieties with $-K_{X}$ nef. They showed that the universal cover $\widetilde{X}$ of $X$ decomposes as the following product:
\[
\widetilde{X} \simeq \C^{d} \times \prod Y_{j} \times \prod S_{k} \times Z,
\]
where $Y_{j}$ are irreducible projective Calabi-Yau varieties, $S_{k}$ are irreducible projective hyperkähler varieties, and $Z$ is a projective rationally connected variety with $-K_{Z}$ nef. Now, we state the following theorem for these type of varieties. 
\begin{theorem*}[Theorem \ref{supplement}]
Let $X$ be an irregular variety of dimension $n \geq 2$ with $-K_{X}$ nef. Let $\operatorname{alb}: X \to  \operatorname{Alb}(X)$ be the Albanese map, and let $D$ be an ample divisor on $X$. If Conjecture \ref{fujitaconjecture} holds in dimension $<n$ and there exists an integer $r$ with $1\leq r \leq \alpha(X)$ such that $|rD_{|_{F}}|$ is basepoint-free for every fiber $F$ of $\operatorname{alb}$, then Conjecture \ref{fujitaconjecture} holds for $X$.
\end{theorem*}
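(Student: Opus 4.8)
The plan is to reduce the statement to Theorem \ref{maintheorem1}, using Cao's local triviality theorem to pass from the general fibre to every fibre. First dispose of the boundary case $\alpha(X)=n$: here Cao's theorem forces $\operatorname{alb}\colon X\to\operatorname{Alb}(X)$ to be an isomorphism (it is a locally trivial fibration whose general fibre, being connected and finite, is a point), so $X$ is an abelian variety and Fujita's conjecture is classical; alternatively one cites Theorem \ref{mainresult3}. So assume $1\le\alpha(X)<n$. By Cao's theorem $\operatorname{alb}$ is a locally trivial fibration onto $\operatorname{alb}(X)=\operatorname{Alb}(X)$; its fibres are connected, so the Stein factorization $f$ of Theorem \ref{maintheorem1} is $\operatorname{alb}$ itself with $Z=\operatorname{Alb}(X)$, and every fibre is smooth, isomorphic to the general fibre $F$, and has trivial normal bundle in $X$ — so that $K_{F_x}=K_X|_{F_x}$ for every $x\in X$.

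Next I would verify condition (2) of Theorem \ref{maintheorem1}. Let $r$, with $1\le r\le\alpha(X)$, be the integer furnished by the hypothesis, so $|rD_{|_{F}}|$ is base point free. Since $D$ is ample and $-K_X$ is nef, the divisor $rD-K_X=rD+(-K_X)$ is ample, in particular nef and big, so condition (2) holds for this $r$. As Conjecture \ref{fujitaconjecture} is assumed in dimension $<n$ and $1\le\alpha(X)<n$, all hypotheses of Theorem \ref{maintheorem1} are met, and part (2) yields that $|K_X+mD|$ has no base point supported on the general fibre for every $m\ge n+1$; equivalently, writing $B_m=\operatorname{Bs}|K_X+mD|$, the image $\operatorname{alb}(B_m)$ is a proper closed subset of $\operatorname{Alb}(X)$.

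It remains to show $B_m=\varnothing$, and this is where nefness of $-K_X$ enters essentially. The idea is that the proof of Theorem \ref{maintheorem1}(2) reduces base point freeness of $|K_X+mD|$ along a fibre to Fujita's conjecture on that fibre (of dimension $n-\alpha(X)<n$) via a vanishing-and-restriction argument requiring only that the fibre be smooth, with the expected adjunction formula and the expected restriction of $D$; by the first paragraph these requirements hold for every fibre of $\operatorname{alb}$, not just the general one, since $\operatorname{alb}$ is locally trivial. Re-running that argument at an arbitrary point $x\in X$ — which lies on a fibre isomorphic to $F$ — then gives $x\notin B_m$; as the fibres of the surjection $\operatorname{alb}$ cover $X$, we get $B_m=\varnothing$ for all $m\ge n+1$, which is Fujita's conjecture for $X$. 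I expect the main obstacle to be precisely this last passage from the general fibre to all fibres: $K_X+mD$ is not translation-invariant on $\operatorname{Alb}(X)$, so sections cannot simply be transported between fibres, and one must instead go through the proof of Theorem \ref{maintheorem1} and check that Cao's local triviality removes every appeal to genericity of $F$.
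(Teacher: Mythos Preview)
Your proposal is correct and follows essentially the paper's route: invoke Cao's theorem for local triviality, dispose of the case $\alpha(X)=n$ (where $X$ is abelian), note that $rD-K_X$ is ample so condition~(2) holds, and then re-run the proof of Theorem~\ref{maintheorem1} at every fibre, using that local triviality makes each fibre smooth with the induction hypothesis and base change available there. The only difference is organizational: the paper does not first apply Theorem~\ref{maintheorem1} as a black box to the general fibre and then upgrade, but goes directly to the fibre-by-fibre argument (which already subsumes the general-fibre case), so your intermediate step is redundant but harmless.
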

\textbf{}

\textbf{Organization of the paper.}
\par In Section 2, after recalling preliminaries, we provide  comparisons between the notion of a sheaf being \emph{continuously globally generated} (Definition \ref{continousdefinition}) and the notion of a sheaf  having no \emph{essential basepoints} (Definition \ref{essantialdefinition}). While these results are not necessarily needed in the proof of Theorem \ref{maintheorem1} and Theorem \ref{supplement}, they are of some independent interest and are given in Proposition \ref{premierproposition} and Proposition \ref{deuxiemeproposition}. 
\par In Section 3, we study Fujita's Conjecture \ref{fujitaconjecture} on irregular varieties. We obtain some interesting results  using the Albanese map, and by assuming the conjecture holds in lower dimensions. In particular, we provide the proofs of Theorem \ref{maintheorem1} and Theorem \ref{supplement}.
    \par In Section 4, we  study the basepoint-freeness of adjoint series for varieties
fibered over abelian varieties, where we state Proposition \ref{kollarvanishing} and Theorem \ref{supplement3}.
\par In Section 5, we explore the basepoint-freeness of adjoint series for varieties
of maximal Albanese dimension. In this case, the Albanese map $\operatorname{alb}$ is generically finite. As a consequence, we obtain Theorem \ref{mainresult3}.
\section{Preliminaries}\label{sec2}
\textbf{}
\par\textbf{Notations.} In what follows, unless otherwise specified, $X$ is a smooth complex projective irregular variety of dimension $n\geq 2$, $Y$  is an abelian variety of dimension $g$, and $\mathcal{F}$ is a nonzero coherent sheaf. We collect below several notations that will be also used frequently throughout the paper. In particular, 
\begin{itemize}
    \item[]\( K_X \): the canonical divisor on \( X \). 
  \item[] \( p \in \operatorname{Pic}^0(X) \): a point in the Picard variety of \( X \), where
    \[
    \operatorname{Pic}^0(X) := \{ p \in \operatorname{Pic}(X) \mid p \text{ is topologically trivial} \},
    \]
    that is, the connected component of the identity consisting of line bundles that are topologically trivial.
    \item[] \( p^\vee \): the dual of \( p \).
    \item[] Without loss of clarity, we use the same symbol $p$ to denote both the line bundle and its corresponding divisor on $X$.
    \item[] $F$ is a general fiber of the morphisms used below.
    \item[] \( D_{|_F} \): the restriction of the divisor \( D \) to  \( F \).
    \item[] \( \C(x) \): the skyscraper sheaf at $x$.
    \item[] $\operatorname{Coh}(.)$: For any algebraic variety $S$, $\operatorname{Coh}(S)$ denotes the abelian category of coherent sheaves on $S$.
    \item[] $\operatorname{alb}$: the Albanese map.
    \item[] $\operatorname{alb}(X)$: the image of the Albanese map.
    \item[] $\operatorname{Alb}(X)$: the Albanese variety of $X$.
    \item[] $\alpha(X)$: the Albanese dimension of $X$ (Definition \ref{dimensionalbanese}).
    \end{itemize}
\par We briefly  recall some basic definitions, we refer to \cite{popapareschi1}, \cite{haconchen}, \cite{Greenlazarsfeld},  \cite{Nathan}, 
\cite{popasurvey} and \cite{popapareschi} for more details. 
\begin{defn}[{\cite[Definition 2.3]{popasurvey}}]\label{itdefn}
    A coherent sheaf $\mathcal{F}$ on an abelian variety $Y$ with $\dim Y = g$ satisfies \emph{IT with index $0$} if 
\[
H^{i}(Y, \mathcal{F} \otimes p) = 0 \hspace{0.2cm} \text{for all } p \in \operatorname{Pic}^{0}(Y) \hspace{0.2cm} \text{and for all } i > 0.
\]
\end{defn}
\begin{defn}[{\cite[Definition 2.10]{popapareschi1}}]\label{continousdefinition}
    Let $X$ be an irregular variety, and let $\mathcal{F}$ be a coherent sheaf on $X$. We say that $\mathcal{F}$ is \emph{continuously globally generated} if for any nonempty open subset $U \subseteq \operatorname{Pic}^{0}(X)$,  the sum of
evaluation maps
\[
\bigoplus_{p \in U} H^{0}(X, \mathcal{F} \otimes p) \otimes p^{\vee} \to \mathcal{F}
\]
is surjective.
\end{defn}
\par We recall the Fourier-Mukai setting. We refer to Mukai \cite{Mukai} for more details. We denote by $\mathcal{P}$ the Poincaré line bundle on $Y \times \operatorname{Pic}^{0}(Y)$, and $Y$ is an abelian variety as before. For any coherent sheaf $\mathcal{F}$ on $Y$, we can associate the sheaf ${p_{2}}_{*}(p_{1}^{*}\mathcal{F} \otimes \mathcal{P})$ on $\operatorname{Pic}^{0}(Y)$ where
$p_{1}$ and $p_{2}$ are the natural projections on $Y$ and $\operatorname{Pic}^{0}(Y)$, respectively. This correspondence gives a
functor
\[
\Hat{S}: \operatorname{Coh}(Y) \to \operatorname{Coh}(\operatorname{Pic}^{0}(Y)).
\]
If we denote by $\operatorname{D}(Y)$ and $\operatorname{D}(\operatorname{Pic}^{0}(Y))$ the bounded derived categories of $\operatorname{Coh}(Y)$ and
$\operatorname{Coh}(\operatorname{Pic}^{0}(Y))$, then the derived functor $$\mathcal{R}\Hat{S}: \operatorname{D}(Y) \to \operatorname{D}(\operatorname{Pic}^{0}(Y))$$  is defined and called the
Fourier-Mukai functor. Similarly, we consider $$\mathcal{R}S : \operatorname{D}(\operatorname{Pic}^{0}(Y))  \to \operatorname{D}(Y)$$ in a similar
way. According to the celebrated result of Mukai \cite[Theorem 2.2]{Mukai}, the Fourier-Mukai functor induces an equivalence of categories between the two derived categories  $\operatorname{D}(Y)$ and $\operatorname{D}(\operatorname{Pic}^{0}(Y))$. More precisely, we have
\[
\mathcal{R}S \circ \mathcal{R}\Hat{S} \simeq (-1_{Y})^{*}[-g]
\]
and 
\[
\mathcal{R}\Hat{S} \circ \mathcal{R}S \simeq (-1_{\operatorname{Pic}^{0}(Y)})^{*}[-g]
\]
where $[-g]$ is  an operation that shifts the complex $g$ places
to the right.
\par Now,  we describe the cohomological support locus \cite{Greenlazarsfeld}: 
\begin{defn}[{\cite[Page 389]{Greenlazarsfeld}}]
Let $\mathcal{F}$ be a coherent sheaf on an abelian variety $Y$ of dimension $g$. The \emph{$i$-th cohomological support locus} $V^{i}(\mathcal{F})$ of $\mathcal{F}$ is defined to be:
    \[
    V^{i}(\mathcal{F}) := \{p \in \operatorname{Pic}^{0}(Y)| H^{i}(Y, \mathcal{F} \otimes p) \neq 0\}, \hspace{0.2cm} \text{for all }   0\leq i \leq g .\]
\end{defn}
  \par The cohomological support locus are studied very carefully in   \cite{Greenlazarsfeld} and \cite{simpson}. There  the authors proved a foundational generic vanishing theorem. 
\begin{remark}
    By base change, the following inclusion always holds: $$\operatorname{Supp}(\mathcal{R}^{i}\Hat{S}(\mathcal{F})) \subseteq V^{i}(\mathcal{F}).$$
    \end{remark}
    \begin{defn}[{\cite[Definition 3.1]{popasurvey}}] A coherent sheaf $\mathcal{F}$ on $Y$ is called \emph{M-regular} if 
 \[
\text{codim}_{\operatorname{Pic}^{0}(Y)}\text{(Supp}(\mathcal{R}^{i}\Hat{S}(\mathcal{F}))) > i \hspace{0.2cm} \text{for all }  i \geq 1.
\]
A coherent sheaf $\mathcal{F}$ on $Y$ is called a \emph{generic vanishing sheaf} or a \emph{GV-sheaf} if its cohomological support locus $V^{i}(\mathcal{F})$ satisfies the following inequality: 
 \[
    \text{codim}_{\operatorname{Pic}^{0}(Y)}(V^{i}(\mathcal{F})) \geq i \hspace{0.2cm} \text{for all }  i \geq 1.
    \]
    \par We remark that $M$-regularity is achieved, in particular, if
    \[
    \text{codim}_{\operatorname{Pic}^{0}(Y)}(V^{i}(\mathcal{F})) > i \hspace{0.2cm} \text{for all } i \geq 1.
    \]
\end{defn}
\begin{remark}\label{remarkproperties}
If $\mathcal{F}$ is an $M$-regular sheaf on an abelian variety $Y$, then $\mathcal{F}$ is a $GV$-sheaf. Furthermore, if $\mathcal{F}$ satisfies $IT$  with index $0$, then it is clearly  $M$-regular. In the following proposition,  we recall that the $M$-regular property is stronger than the notion of being continuously globally generated.
\end{remark}
\begin{proposition}[{\cite[Proposition 2.13]{popapareschi1}}]\label{propoMregularity}
    Every $M$-regular coherent sheaf $\mathcal{F}$ on an abelian variety $Y$ is continuously globally generated.
\end{proposition}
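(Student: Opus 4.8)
The plan is to follow the Fourier--Mukai argument of Pareschi--Popa: reduce the statement to a fibrewise one and then transport the resulting obstruction through $\mathcal{R}\Hat{S}$. Write $g=\dim Y$. Since surjectivity of a map of coherent sheaves can be tested on fibres by Nakayama's lemma, $\mathcal{F}$ is continuously globally generated if and only if, for every $y\in Y$ and every non-empty open $U\subseteq\operatorname{Pic}^{0}(Y)$, the direct sum of the evaluation maps $H^{0}(Y,\mathcal{F}\otimes p)\otimes p^{\vee}\to\mathcal{F}$ ($p\in U$) is surjective onto the fibre $\mathcal{F}\otimes k(y)$. Suppose this fails for some $y$ and some $U$. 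Then the images of the $H^{0}(\mathcal{F}\otimes p)$ in $\mathcal{F}\otimes k(y)$, suitably untwisted, all lie in a fixed proper subspace; choosing a hyperplane containing that subspace produces a nonzero surjection $q\colon\mathcal{F}\twoheadrightarrow k(y)$ onto the skyscraper sheaf at $y$ with the property that the induced map $H^{0}(\mathcal{F}\otimes p)\to H^{0}(k(y)\otimes p)$ is zero for every $p\in U$.

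Next I would pass to $\operatorname{Pic}^{0}(Y)$. Put $\mathcal{N}=\ker q$ and apply $\mathcal{R}\Hat{S}$ to $0\to\mathcal{N}\to\mathcal{F}\xrightarrow{q}k(y)\to 0$. Since $k(y)$ is a skyscraper, $\mathcal{R}\Hat{S}(k(y))$ is the topologically trivial line bundle $P_{y}:=\mathcal{P}|_{\{y\}\times\operatorname{Pic}^{0}(Y)}$ placed in degree $0$, and for it base change is an isomorphism. Via the commuting square of base-change morphisms, the hypothesis on $q$ forces $\mathcal{R}^{0}\Hat{S}(q)\otimes k(p)=0$ for all $p\in U$; as $P_{y}$ is locally free, hence torsion-free, on the integral variety $\operatorname{Pic}^{0}(Y)$, a map into it vanishing on a dense open is identically zero, so $\mathcal{R}^{0}\Hat{S}(q)=0$. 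The long exact sequence of Fourier--Mukai cohomology sheaves then degenerates to $\mathcal{R}^{0}\Hat{S}(\mathcal{N})\cong\mathcal{R}^{0}\Hat{S}(\mathcal{F})$, a short exact sequence $0\to P_{y}\to\mathcal{R}^{1}\Hat{S}(\mathcal{N})\to\mathcal{R}^{1}\Hat{S}(\mathcal{F})\to 0$, and isomorphisms $\mathcal{R}^{i}\Hat{S}(\mathcal{N})\cong\mathcal{R}^{i}\Hat{S}(\mathcal{F})$ for $i\geq 2$.

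Finally I would invert the transform to reach a contradiction, and this is where the strict codimension estimates of $M$-regularity are essential. By Mukai's inversion, $\mathcal{R}S\,\mathcal{R}\Hat{S}(\mathcal{N})\simeq(-1_{Y})^{*}\mathcal{N}[-g]$ has cohomology only in degree $g$, in particular none in degree $g+1$. I would compute the left-hand side through the spectral sequence $E_{2}^{s,t}=\mathcal{R}^{s}S\big(\mathcal{R}^{t}\Hat{S}(\mathcal{N})\big)\Rightarrow\mathcal{H}^{s+t}\big(\mathcal{R}S\,\mathcal{R}\Hat{S}(\mathcal{N})\big)$ and compare it with the analogous spectral sequence for $\mathcal{F}$. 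Because $\mathcal{F}$ is $M$-regular, $\mathcal{R}^{t}\Hat{S}(\mathcal{F})$ is supported in codimension $>t$ for $t\geq 1$, which gives $\mathcal{R}^{s}S\big(\mathcal{R}^{t}\Hat{S}(\mathcal{F})\big)=0$ whenever $t\geq 1$ and $s+t\geq g$. The two spectral sequences have the same $E_{2}$-page except in the row $t=1$, where the extra summand $P_{y}\subseteq\mathcal{R}^{1}\Hat{S}(\mathcal{N})$ (with quotient $\mathcal{R}^{1}\Hat{S}(\mathcal{F})$ of codimension $\geq 2$) produces $E_{2}^{g,1}=\mathcal{R}^{g}S(P_{y})$, a nonzero skyscraper lying in total degree $g+1$; the vanishing just stated kills every differential into or out of this spot, so it survives to $E_{\infty}$ and contributes to $\mathcal{H}^{g+1}$ --- a contradiction. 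Hence no such $(y,U)$ exists and $\mathcal{F}$ is continuously globally generated. I expect the main obstacle to be precisely this last computation: running the two spectral sequences carefully enough to see that the unwanted skyscraper cannot be cancelled, and isolating the exact role of the strict inequality in the definition of $M$-regularity (as opposed to the mere $GV$ inequality) --- namely the vanishing $\mathcal{R}^{s}S(\mathcal{R}^{t}\Hat{S}(\mathcal{F}))=0$ for $t\geq 1$, $s+t\geq g$. A smaller, routine point is the base-change step above, since $\mathcal{R}^{0}\Hat{S}$ does not commute with base change in general.
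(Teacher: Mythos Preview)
The paper does not give a proof of this proposition at all: it simply cites \cite[Proposition~2.13]{popapareschi1}. Your argument is a correct and essentially complete rendition of the Fourier--Mukai proof underlying that reference, so in that sense you are supplying exactly what the paper outsources.

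Both of the points you flag as potential obstacles are in fact fine. For the base-change step, the commuting square
\[
\begin{tikzcd}
\mathcal{R}^{0}\Hat{S}(\mathcal{F})\otimes k(p) \arrow[r] \arrow[d] & P_{y}\otimes k(p) \arrow[d,"\simeq"] \\
H^{0}(Y,\mathcal{F}\otimes p) \arrow[r,"0"] & H^{0}(Y,k(y)\otimes p)
\end{tikzcd}
\]
only needs the right-hand vertical arrow to be an isomorphism, and this holds because $\mathcal{R}\Hat{S}(k(y))=P_{y}$ is concentrated in degree $0$; no base-change hypothesis on $\mathcal{F}$ is required. Nakayama then shows the image of $\mathcal{R}^{0}\Hat{S}(q)$ is supported on $\operatorname{Pic}^{0}(Y)\setminus U$, hence is a torsion subsheaf of the line bundle $P_{y}$, hence zero. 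For the spectral-sequence step, the strict inequality in $M$-regularity gives $\dim\operatorname{Supp}\mathcal{R}^{t}\Hat{S}(\mathcal{F})\leq g-t-1$ for $t\geq 1$, so $\mathcal{R}^{s}S(\mathcal{R}^{t}\Hat{S}(\mathcal{F}))=0$ whenever $s\geq g-t$; in particular both $\mathcal{R}^{g-1}S(\mathcal{R}^{1}\Hat{S}(\mathcal{F}))$ and $\mathcal{R}^{g}S(\mathcal{R}^{1}\Hat{S}(\mathcal{F}))$ vanish, so $E_{2}^{g,1}(\mathcal{N})=\mathcal{R}^{g}S(P_{y})=k(-y)$ exactly, and every $E_{2}^{g-r,r}(\mathcal{N})=\mathcal{R}^{g-r}S(\mathcal{R}^{r}\Hat{S}(\mathcal{F}))$ with $r\geq 2$ vanishes. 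The skyscraper therefore survives to $E_{\infty}^{g,1}$ and contradicts $\mathcal{R}S\,\mathcal{R}\Hat{S}(\mathcal{N})\simeq(-1_{Y})^{*}\mathcal{N}[-g]$. This is precisely where the $GV$ condition alone would fail: with only $\dim\operatorname{Supp}\mathcal{R}^{1}\Hat{S}(\mathcal{F})\leq g-1$ one cannot kill $\mathcal{R}^{g-1}S(\mathcal{R}^{1}\Hat{S}(\mathcal{F}))$, and the incoming $d_{2}$ could cancel the skyscraper.
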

\begin{defn}[{\cite[Definition 2.2]{haconchen}}]\label{essantialdefinition}
A coherent sheaf $\mathcal{F}$ on an irregular variety $X$ is said to have an \emph{essential basepoint} at $x$ if there exists a surjective map $\mathcal{F} \to \C(x)$ such that for all $p \in \operatorname{Pic}^{0}(X)$, the induced map $H^{0}(X, \mathcal{F} \otimes p ) \to H^{0}(X, \C(x))$ is zero.
\end{defn}
\par In \cite{haconchen}, the authors proved the next proposition.
\begin{proposition}[{\cite[Proposition 2.3]{haconchen}}]\label{haconchenproposition}
If $\mathcal{F}$ is a coherent  sheaf on an abelian variety $Y$  that satisfies $IT$ with index $0$, then $\mathcal{F}$ has no essential basepoints.
\end{proposition}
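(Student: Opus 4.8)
The plan is to deduce this from continuous global generation, using what has just been recalled: since $\mathcal{F}$ satisfies $IT$ with index $0$, Remark \ref{remarkproperties} gives that $\mathcal{F}$ is $M$-regular, and then Proposition \ref{propoMregularity} gives that $\mathcal{F}$ is continuously globally generated. So it suffices to prove the general fact that a continuously globally generated coherent sheaf $\mathcal{F}$ on $Y$ has no essential base point.

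To prove that, suppose for contradiction that $\mathcal{F}$ has an essential base point at a point $x$, witnessed by a surjection $\phi\colon\mathcal{F}\to\C(x)$. By Definition \ref{essantialdefinition} this means that for every $p\in\operatorname{Pic}^{0}(Y)$ the induced map $H^{0}(Y,\mathcal{F}\otimes p)\to H^{0}(Y,\C(x))$ vanishes (its target being naturally one-dimensional, so "vanishes" here is the same as "is not surjective"). Fix a non-empty open set $U\subseteq\operatorname{Pic}^{0}(Y)$ and form the composite morphism of sheaves
\[
\bigoplus_{p\in U} H^{0}(Y,\mathcal{F}\otimes p)\otimes p^{\vee}\ \xrightarrow{\ \mathrm{ev}\ }\ \mathcal{F}\ \xrightarrow{\ \phi\ }\ \C(x).
\]
On the summand indexed by $p$, the fiber at $x$ of $\mathrm{ev}$ followed by $\phi$ is — after the canonical identification of $p\otimes p^{\vee}$ with $\mathcal{O}_{Y}$ — precisely evaluation at $x$ on $H^{0}(Y,\mathcal{F}\otimes p)$ post-composed with $\phi$, that is, the map from Definition \ref{essantialdefinition}, which vanishes by assumption. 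Since a coherent-sheaf morphism into the skyscraper $\C(x)$ is zero as soon as it vanishes on the fiber at $x$, the whole composite is the zero morphism. But $\mathrm{ev}$ is surjective because $\mathcal{F}$ is continuously globally generated, and $\phi$ is surjective by hypothesis; hence the composite is a surjection onto $\C(x)\neq 0$, a contradiction. Therefore $\mathcal{F}$ has no essential base point.

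The step I expect to need the most care is this last identification: translating the condition of Definition \ref{essantialdefinition}, phrased via maps of spaces of global sections for every $p$, into the assertion that a single composite \emph{sheaf} morphism is identically zero. It rests on three routine but worth-stating-carefully points: that a morphism from a coherent sheaf to $\C(x)$ is detected on the fiber at $x$; the compatibility of the fiberwise evaluation map with the evaluation on global sections; and the one-dimensionality of $H^{0}(Y,\C(x))$. Everything else is formal.

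One can also argue directly, which is in essence the proof in \cite{haconchen} and bypasses continuous global generation. Put $\mathcal{G}=\ker\phi$, so $0\to\mathcal{G}\to\mathcal{F}\xrightarrow{\phi}\C(x)\to 0$. Twisting by $p\in\operatorname{Pic}^{0}(Y)$ and using $H^{1}(Y,\mathcal{F}\otimes p)=0$ (which holds by hypothesis), the long exact sequence gives $H^{1}(Y,\mathcal{G}\otimes p)\cong\operatorname{coker}\!\big(H^{0}(Y,\mathcal{F}\otimes p)\to H^{0}(Y,\C(x))\big)$ and $H^{i}(Y,\mathcal{G}\otimes p)=0$ for $i\geq 2$; so an essential base point at $x$ would force $V^{1}(\mathcal{G})=\operatorname{Pic}^{0}(Y)$. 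Applying the Fourier--Mukai functor to the short exact sequence — using that $\mathcal{R}\hat{S}(\mathcal{F})=\hat{S}(\mathcal{F})$ is a locally free sheaf placed in degree $0$ (by the $IT$ hypothesis) and that $\mathcal{R}\hat{S}(\C(x))$ is a line bundle placed in degree $0$ — and then the inversion formula $\mathcal{R}S\circ\mathcal{R}\hat{S}\simeq(-1_{Y})^{*}[-g]$ produces an exact triangle whose cohomology sheaves force $\C(-x)=0$, the desired contradiction. The only nuisance along this route is the bookkeeping with the shifts.
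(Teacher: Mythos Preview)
Your proposal is correct. The paper itself does not supply a proof of this proposition---it is simply cited from \cite{haconchen}---so there is no ``paper's own proof'' to compare against directly. That said, your first argument is exactly the content of the very next result, Proposition~\ref{premierproposition}, which the paper \emph{does} prove: continuously globally generated implies no essential base points. Your contradiction argument and the paper's direct argument there are the same proof read in opposite directions (the paper shows the composite is surjective and hence some summand map is nonzero; you assume every summand map is zero and derive that the composite vanishes). Combined with the chain $IT_0 \Rightarrow M$-regular $\Rightarrow$ continuously globally generated from Remark~\ref{remarkproperties} and Proposition~\ref{propoMregularity}, this is precisely the route the paper itself later uses whenever it invokes Proposition~\ref{haconchenproposition}.

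Your second, Fourier--Mukai sketch is closer in spirit to the original argument in \cite{haconchen}; it is a reasonable outline, though as you note the bookkeeping with shifts and the final contradiction would need to be written out carefully to stand on its own.
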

\par The proof of the main theorems does not rely on the following propositions, but we are interested in comparing Definition \ref{continousdefinition} and Definition \ref{essantialdefinition}.
\begin{proposition}\label{premierproposition}
Let $\mathcal{F}$ be a coherent sheaf on an irregular variety $X$. If $\mathcal{F}$ is continuously globally generated, then $\mathcal{F}$ has no essential basepoints.
\end{proposition}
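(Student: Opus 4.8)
The plan is to argue by contradiction. Suppose $\mathcal{F}$ has an essential base point at some $x\in X$, witnessed by a surjection $\phi\colon\mathcal{F}\to\C(x)$ for which, for every $p\in\operatorname{Pic}^{0}(X)$, the induced map on global sections $H^{0}(X,\mathcal{F}\otimes p)\to H^{0}(X,\C(x))$ is zero. I want to contradict continuous global generation (Definition \ref{continousdefinition}).

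First I would apply Definition \ref{continousdefinition} with $U=\operatorname{Pic}^{0}(X)$ (any non-empty open set would do) to get that the evaluation map
\[
\mathrm{ev}\colon\ \bigoplus_{p\in\operatorname{Pic}^{0}(X)} H^{0}(X,\mathcal{F}\otimes p)\otimes p^{\vee}\ \longrightarrow\ \mathcal{F}
\]
is surjective. Since surjectivity of a morphism of sheaves is preserved upon passing to the stalk at $x$ and then tensoring with the residue field $\C(x)$ (right-exactness of $-\otimes_{\mathcal{O}_X}\C(x)$), the induced map on fibers at $x$,
\[
\bigoplus_{p} H^{0}(X,\mathcal{F}\otimes p)\otimes p^{\vee}|_{x}\ \longrightarrow\ \mathcal{F}\otimes\C(x),
\]
is still surjective. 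Composing with $\phi\otimes\C(x)\colon\mathcal{F}\otimes\C(x)\to\C(x)$, which is surjective because $\phi$ is, I obtain a surjective — in particular nonzero — map
\[
\Psi\colon\ \bigoplus_{p} H^{0}(X,\mathcal{F}\otimes p)\otimes p^{\vee}|_{x}\ \longrightarrow\ \C(x).
\]

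The second step is to unwind $\Psi$ on each summand and show it is forced to vanish. Fix $p$. The component $H^{0}(X,\mathcal{F}\otimes p)\otimes p^{\vee}\to\mathcal{F}$ sends $s\otimes\xi$, with $s$ a global section of $\mathcal{F}\otimes p$ viewed as a map $p^{\vee}\to\mathcal{F}$, to the corresponding contraction; on fibers at $x$ this is $s\otimes\xi\mapsto\langle s(x),\xi\rangle\in\mathcal{F}\otimes\C(x)$, where $s(x)\in(\mathcal{F}\otimes p)\otimes\C(x)$ and $\xi\in p^{\vee}|_{x}$. Applying $\phi\otimes\C(x)$ gives $\langle(\phi\otimes\mathrm{id}_{p})(s(x)),\xi\rangle$, and $(\phi\otimes\mathrm{id}_{p})(s(x))$ is precisely the image of $s$ under the map $H^{0}(X,\mathcal{F}\otimes p)\to H^{0}(X,\C(x)\otimes p)$ induced by $\phi$, which is zero by the definition of essential base point. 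Hence $\Psi$ vanishes on every summand, so $\Psi=0$, contradicting the previous paragraph. This contradiction shows that $\mathcal{F}$ has no essential base point.

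The hard part will not be the homological algebra — the passage from sheaf surjectivity to fiber surjectivity and the handling of the (large) index set $\operatorname{Pic}^{0}(X)$ are routine — but rather the bookkeeping with the line-bundle twists: one must check that the hypothesis "the induced map on $H^{0}$ is zero" in Definition \ref{essantialdefinition}, after tensoring by $p$ and restricting to the fiber at $x$, really yields the vanishing of exactly the quantity $(\phi\otimes\mathrm{id}_{p})(s(x))$ that enters $\Psi$, and that this is insensitive to the noncanonical identification of $\C(x)\otimes p$ with $\C(x)$ implicit in writing $H^{0}(X,\C(x))$. Once this identification is made carefully, the argument closes as above.
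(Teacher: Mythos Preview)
Your argument is correct and follows essentially the same route as the paper: compose the surjective evaluation map coming from continuous global generation with a given quotient $\mathcal{F}\to\C(x)$, observe that the composite is surjective (hence nonzero), and conclude that some summand indexed by $p$ contributes nontrivially, contradicting the essential-base-point hypothesis. The only cosmetic differences are that the paper phrases this directly rather than by contradiction and leaves the fiber-at-$x$ bookkeeping implicit, whereas you spell out the stalk/residue-field passage and the identification $\C(x)\otimes p\cong\C(x)$ in more detail.
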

\begin{proof}
If $\mathcal{F}$ is continuously globally generated, then for any nonempty open subset $U \subseteq \operatorname{Pic}^{0}(X)$, the sum of evaluation maps  
\[
\bigoplus_{p \in U} H^{0}(X, \mathcal{F} \otimes p) \otimes p^{\vee} \to \mathcal{F} 
\]
is surjective. Now,  fix a point $x \in X$ and suppose we are given a surjective map $\mathcal{F} \to \C(x)$. Thus, we have the following commutative diagram:
\[\begin{tikzcd}
\displaystyle \bigoplus_{p \in U} {H^{0}(X, \mathcal{F} \otimes p) \otimes p^{\vee}} \arrow[r]
\arrow{rd}[swap]{\phi} & \mathcal{F} \arrow[d]\\
& \C(x)
\end{tikzcd}
\]
However, the induced map $\phi$ defined by composition is surjective and hence nonzero. In particular, there exists $p \in U$ such that the map $$H^{0}(X, \mathcal{F} \otimes p ) \otimes p^{\vee} \to  \C(x)$$ is nonzero, and hence the map
\[
H^{0}(X, \mathcal{F} \otimes p) \to\C(x)
\]
is nonzero. Thus $\mathcal{F}$ has no essential basepoints as claimed.
\end{proof}
\begin{example}\label{ampleexample}
It is clear that every ample line bundle $L$ on an abelian variety $Y$ satisfies $IT$ with index $0$, and thus is continuously globally generated. Furthermore, by Proposition \ref{haconchenproposition}, we also see that $L$ has no essential basepoints. On the other hand, the converse of Proposition \ref{premierproposition} is not true. Indeed, for an irregular variety $X$,   $\mathcal{O}_{X} \in \operatorname{Pic}^{0}(X)$ has no essential basepoints, however, it is not continuously globally generated.
\end{example}
In the next proposition, we provide a sufficient condition for Definition \ref{continousdefinition} and Definition \ref{essantialdefinition} to be equivalent in the case of line bundles.
\begin{proposition}\label{deuxiemeproposition}
    Let $X$ be an irregular  variety, and let $D$ be a divisor on $X$ such that $h^{0}(X, \mathcal{O}_{X}(D) \otimes p)$ is constant for all $p \in \operatorname{Pic}^{0}(X)$. Then, $\mathcal{O}_{X}(D)$ has no essential basepoints if and only if $\mathcal{O}_{X}(D)$ is continuously globally generated.
\end{proposition}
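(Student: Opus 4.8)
The implication ``continuously globally generated $\Rightarrow$ no essential base points'' is the content of Proposition \ref{premierproposition} and uses neither hypothesis, so the plan is to concentrate on the converse. Assume $D$ has no essential base points and that $p\mapsto h^{0}(X,\mathcal{O}_{X}(D)\otimes p)$ is constant on $\operatorname{Pic}^{0}(X)$; the goal is to show, for every non-empty open $U\subseteq\operatorname{Pic}^{0}(X)$, that the sum of evaluation maps $\bigoplus_{p\in U}H^{0}(X,\mathcal{O}_{X}(D)\otimes p)\otimes p^{\vee}\to\mathcal{O}_{X}(D)$ is surjective. Since the target is a line bundle, this may be checked fibre by fibre, so it suffices to prove that for every $x\in X$ there is some $p\in U$ with $x\notin\operatorname{Bs}|D\otimes p|$. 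The first step is to rewrite both notions in terms of base loci: because $\mathcal{O}_{X}(D)$ is a line bundle, every surjection $\mathcal{O}_{X}(D)\to\C(x)$ is a non-zero scalar multiple of a fixed one, and for such a surjection the induced map $H^{0}(X,\mathcal{O}_{X}(D)\otimes p)\to H^{0}(X,\C(x))$ vanishes exactly when every section of $\mathcal{O}_{X}(D)\otimes p$ vanishes at $x$. Hence ``$D$ has no essential base points'' says $\bigcap_{p\in\operatorname{Pic}^{0}(X)}\operatorname{Bs}|D\otimes p|=\emptyset$, while ``$D$ is continuously globally generated'' says $\bigcap_{p\in U}\operatorname{Bs}|D\otimes p|=\emptyset$ for every non-empty open $U$.

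Next, fix $x\in X$ and set $W_{x}:=\{p\in\operatorname{Pic}^{0}(X)\mid x\notin\operatorname{Bs}|D\otimes p|\}$; the plan is to show $W_{x}$ is a non-empty Zariski-open subset. To see openness, let $\mathcal{P}_{X}$ be a Poincaré line bundle on $X\times\operatorname{Pic}^{0}(X)$ and put $\mathcal{L}:=p_{1}^{*}\mathcal{O}_{X}(D)\otimes\mathcal{P}_{X}$, so that $\mathcal{L}|_{X\times\{p\}}\cong\mathcal{O}_{X}(D)\otimes p$. Writing $i_{x}\colon\{x\}\times\operatorname{Pic}^{0}(X)\hookrightarrow X\times\operatorname{Pic}^{0}(X)$ and $\mathcal{M}:=\mathcal{L}|_{\{x\}\times\operatorname{Pic}^{0}(X)}$, a line bundle on $\operatorname{Pic}^{0}(X)$, the restriction map $\mathcal{L}\to i_{x*}\mathcal{M}$ pushes forward along $p_{2}$ to a morphism of coherent sheaves $e_{x}\colon p_{2*}\mathcal{L}\to\mathcal{M}$, and $W_{x}$ is precisely the locus where $e_{x}$ is surjective, hence the complement of $\operatorname{Supp}(\operatorname{coker}e_{x})$, an open set. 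The identification of that locus with $W_{x}$ is where the constancy hypothesis is decisive: since $\operatorname{Pic}^{0}(X)$ is smooth, hence reduced, and $h^{0}(X,\mathcal{O}_{X}(D)\otimes p)$ is constant, Grauert's base-change theorem guarantees that $p_{2*}\mathcal{L}$ is locally free and commutes with base change, so the fibre of $e_{x}$ at $p$ is the genuine evaluation map $H^{0}(X,\mathcal{O}_{X}(D)\otimes p)\to(\mathcal{O}_{X}(D)\otimes p)|_{x}$, which is surjective (equivalently non-zero, the target being one-dimensional) exactly when $x\notin\operatorname{Bs}|D\otimes p|$. Non-emptiness of $W_{x}$ is then immediate from the first step, as $D$ has no essential base points.

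Finally, $\operatorname{Pic}^{0}(X)$ is an abelian variety and therefore irreducible, so the non-empty open subsets $U$ and $W_{x}$ intersect; any $p\in U\cap W_{x}$ witnesses $x\notin\operatorname{Bs}|D\otimes p|$. Since $x$ was arbitrary, $\bigcap_{p\in U}\operatorname{Bs}|D\otimes p|=\emptyset$, so the evaluation map above is surjective at every point and $D$ is continuously globally generated. I expect the only genuine obstacle to be the base-change bookkeeping in the middle step — confirming that the fibre of the pushed-forward morphism $e_{x}$ is literally the evaluation of sections at $x$ — which is exactly the point where constancy of $h^{0}$ cannot be dropped; without it the converse fails, as the example $D=\mathcal{O}_{X}$ (which has no essential base points but is not continuously globally generated, and for which $h^{0}$ jumps) already shows in Example \ref{ampleexample}.
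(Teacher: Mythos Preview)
Your argument is correct and follows the same overarching strategy as the paper: fix $x\in X$, show that the set $W_{x}=\{p\in\operatorname{Pic}^{0}(X)\mid x\notin\operatorname{Bs}|D\otimes p|\}$ is non-empty (from ``no essential base points'') and Zariski-open (from constancy of $h^{0}$), hence meets any non-empty open $U$. The only difference is in how openness of $W_{x}$ is established. You build the universal family via a Poincar\'e bundle on $X\times\operatorname{Pic}^{0}(X)$, invoke Grauert to make $p_{2*}\mathcal{L}$ locally free and compatible with base change, and then read $W_{x}$ off as the complement of the support of $\operatorname{coker}(e_{x})$. The paper instead works pointwise: from the short exact sequence $0\to\operatorname{Ker}\psi\to\mathcal{O}_{X}(D)\to\C(x)\to 0$ one sees that the evaluation map is surjective at $p$ exactly when $h^{0}(\operatorname{Ker}\psi\otimes p)<h^{0}(D\otimes p)$, and since the right-hand side is constant while the left-hand side is upper semi-continuous, this is an open condition. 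Your route is a touch heavier (it requires the existence of a Poincar\'e bundle on $X\times\operatorname{Pic}^{0}(X)$ and Grauert), but it has the virtue of making the role of the constancy hypothesis completely transparent as the exact input to base change; the paper's argument is lighter and needs only ordinary semi-continuity.
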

\begin{proof}
    Assume that $\mathcal{O}_{X}(D)$ has no essential basepoints, then for all $x \in X$ and for any surjective map 
    \[\psi: \mathcal{O}_{X}(D) \to \C(x),\] we can find $p_{x} \in \operatorname{Pic}^{0}(X)$ (Definition \ref{essantialdefinition}) such that the induced map \[H^{0}(X, \mathcal{O}_{X}(D) \otimes p_{x} ) \to H^{0}(X, \C(x))\] 
    is surjective. Thus $$h^{0}(X, \operatorname{Ker}\psi \otimes p_x) = h^{0}(X, \mathcal{O}_{X}(D) \otimes p_{x} ) -1.$$  
    By the upper semi-continuity of $h^{0}(X, \operatorname{Ker}\psi \otimes p)$ as $p$ varies in $\operatorname{Pic}^{0}(X)$, we deduce that 
    \[
    h^{0}(X, \operatorname{Ker}\psi \otimes p_x) = h^{0}(X, \operatorname{Ker}\psi \otimes p)
    \]
    for general $p \in \operatorname{Pic}^{0}(X)$. By assumption, $h^{0}(X, \mathcal{O}_{X}(D) \otimes p)$ is constant for all $p \in \operatorname{Pic}^{0}(X)$. Therefore,  we conclude that the map 
   \begin{equation}\label{surforgen}
       H^{0}(X, \mathcal{O}_{X}(D) \otimes p) \to H^{0}(X, \C(x)) \simeq \C(x)
       \end{equation}
   is surjective for general $p \in \operatorname{Pic}^{0}(X)$. 
   \par Now, we shall establish that $\mathcal{O}_{X}(D)$ is continuously globally generated. In other words, we will prove for all $x \in X$ and for all $U \subseteq \operatorname{Pic}^{0}(X)$ nonempty open subset, the sum of evaluation maps 
\begin{equation}\label{surj1}
\bigoplus_{p \in U} H^{0}(X,\mathcal{O}_{X}(D) \otimes p) \otimes p^{\vee} \to \mathcal{O}_{X}(D)_{|_{x}} \simeq \C(x)
\end{equation}
is surjective. Indeed, the map $(\ref{surforgen})$  is surjective for general $p \in \operatorname{Pic}^{0}(X)$, in particular for some $p_{U} \in U$. Thus, twisting the map (\ref{surforgen}) by $p_{U}^{\vee}$ and then the following map 

\[H^{0}(X, \mathcal{O}_{X}(D) \otimes p_{U}) \otimes p_{U}^{\vee} \to \C(x)
\]
is surjective. Finally, we conclude that the sum of
evaluation maps (\ref{surj1}) is surjective as desired. The converse is proved in Proposition \ref{premierproposition}.
\end{proof}
\begin{example}
As in Example $\ref{ampleexample}$, if $L$ is an ample line bundle on an abelian variety $Y$, then $h^{0}(Y, L \otimes p)$ is constant for all $p \in \operatorname{Pic}^{0}(Y)$ since $\chi(Y, L \otimes p)$ is constant for all $p \in \operatorname{Pic}^{0}(Y)$. Moreover, it has no essential basepoints and is continuously globally generated. Hence, Proposition \ref{deuxiemeproposition} is not empty. 
\par However, for an irregular variety $X$, it can happen that there exists a divisor $D$ such that $h^{0}(X, \mathcal{O}_{X}(D)\otimes p)$ is constant for all $p \in \operatorname{Pic}^{0}(X)$, but $\mathcal{O}_{X}(D)$ is not continuously globally generated. Indeed, for instance, take $X = \mathbb{CP}^{1} \times C$ with $C$ an elliptic curve, and let $p_{2}$ be the second projection. Let $D = K_{X} +B$, where $B$ is a section of $p_{2}$. Thus, $B$ is $p_{2}$-ample and hence, by the relative Kodaira vanishing theorem, $\mathcal{R}^{1}(p_{2})_{*}(\mathcal{O}_{X}(D)) =0$. Also, $(p_{2})_{*}(\mathcal{O}_{X}(D)) = 0$ because $$h^0(\mathbb{CP}^1, \mathcal{O}_{X}(D)_{|_{\mathbb{CP}^1}})= h^0(\mathbb{CP}^1, \mathcal{O}_{\mathbb{CP}^1}(-1)) = 0.$$ 
It follows that $h^{i}(X, \mathcal{O}_{X}(D) \otimes p) = 0$, for all $i$ with $0 \leq i \leq 2$ and for all $p \in \operatorname{Pic}^{0}(X)$. In particular, $\mathcal{O}_{X}(D)$ is not continuously globally generated.
\end{example}

\par We recall the definition of Albanese dimension.
\begin{defn}\label{dimensionalbanese}
    We define the Albanese dimension $\alpha(X)$ of an irregular variety $X$ by 
    \[
    \alpha(X):= \dim \operatorname{alb}(X).
    \]
    Here, $\operatorname{alb}(X)$ is the image of the Albanese map $\operatorname{alb}: X \to \operatorname{alb}(X) \subseteq \operatorname{Alb}(X)$, where $\operatorname{Alb}(X)$ is the Albanese variety.
\end{defn}
\par Consider the Stein factorization $X \xrightarrow{f} Z \xrightarrow{u} \operatorname{alb}(X)$ of $\operatorname{alb}$, where $f$ is a surjective morphism  with connected fibers, we denote a general fiber of $f$ by $F$. Thus, we have the following diagram:
\begin{equation}\label{diagram}
\begin{tikzcd}
X \arrow[r, "f"]
\arrow{rd}[swap]{\operatorname{alb}} & Z \arrow[d,"u"]\\
& \operatorname{alb}(X)
\end{tikzcd}
\end{equation}
where $u: Z \to \operatorname{alb}(X)$ is a finite morphism.

\section{Fujita's Freeness Conjecture on Irregular Varieties}\label{sec3}
Before presenting the main results of this article, we first establish some auxiliary lemmas that will be used in the subsequent proofs. These preliminary results serve as key technical ingredients and help clarify the arguments that follow.
\begin{lemma}\label{lemma1}
Let $X$ be an irregular variety of dimension $n\geq 2$ with $h:X \to Y$ be a morphism to an Abelian variety $Y$. Let $X \xrightarrow{f} Z \xrightarrow{u} Y$ be the Stein factorization of $h$, and let $F$ be a general fiber of $f$. Let $N$ be a divisor on $X$. If $|N_{|_{F}}|$ is basepoint-free and $f_{*}\mathcal{O}_{X}(N)$ is continuously globally generated, then $|N + h^{*}p|$ has no basepoints supported on $F$ for general $p \in \operatorname{Pic}^{0}(Y)$.
\end{lemma}
\begin{proof} Indeed, take a point $x\in F$ such that $z =f(x)$. By assumption, $f_{*}\mathcal{O}_{X}(N)$ is continuously globally generated, that is
for every nonempty open subset $U \subset \operatorname{Pic}^{0}(Z)$, the following direct sum of evaluation maps 
\begin{equation}\label{se1*}
\bigoplus_{p \in U} H^{0}(Z, f_{*}\mathcal{O}_{X}(N)\otimes p) \otimes p^{\vee} \to f_{*}\mathcal{O}_{X}(N)_{|_{z}}
\end{equation}
is surjective. Since $\operatorname{Pic}^{0}(Y) \hookrightarrow \operatorname{Pic}^{0}(Z)$, we have in particular that for every nonempty open subset $U \subset \operatorname{Pic}^{0}(Y)$, the direct sum of evaluation maps
\begin{equation}\label{se1**}
\bigoplus_{p \in U} H^{0}(Z, f_{*}\mathcal{O}_{X}(N)\otimes u^{*}p) \otimes u^{*}p^{\vee} \to f_{*}\mathcal{O}_{X}(N)_{|_{z}}
\end{equation}
is surjective. Furthermore,  \begin{equation}\label{se1}
f_{*}\mathcal{O}_{X}(N)_{|_{z}} = H^{0}(F, \mathcal{O}_{F}(N)) 
\end{equation}
and 
\begin{equation}\label{se2}
H^{0}(Z, f_{*}\mathcal{O}_{X}(N)\otimes u^{*}p) \simeq H^{0}(X, \mathcal{O}_{X}(N) \otimes h^{*}p).
\end{equation}
Combining (\ref{se2}) with (\ref{se1}) and (\ref{se1**}), we obtain that the following sum of maps
\begin{equation}\label{se3}
\bigoplus_{p \in U} H^{0}(X, \mathcal{O}_{X}(N) \otimes h^{*}p) \otimes h^{*}p^{\vee} \to H^{0}(F, \mathcal{O}_{F}(N))
\end{equation}
is surjective. By hypothesis, $|N_{|_{F}}|$ is basepoint-free. Together with (\ref{se3}), this implies that for every nonempty open subset $U \subset \operatorname{Pic}^{0}(Y)$, the following direct sum of evaluation maps
\begin{equation}\label{se4}
\bigoplus_{p \in U} H^{0}(X, \mathcal{O}_{X}(N) \otimes h^{*}p) \otimes h^{*}p^{\vee} \to \mathcal{O}_{X}(N)_{|_{x}}    
\end{equation}
is surjective. In particular, for some $p_{U} \in U$, the map 
\begin{equation}\label{se5}
 H^{0}(X, \mathcal{O}_{X}(N) \otimes h^{*}p_{U}) \otimes h^{*}p_{U}^{\vee} \to \mathcal{O}_{X}(N)_{|_{x}}    
\end{equation}
is surjective. Thus, twisting $(\ref{se5})$ by $h^{*}p_{U}$, we deduce that 
\begin{equation}\label{se6}
 H^{0}(X, \mathcal{O}_{X}(N) \otimes h^{*}p_{U}) \to \mathcal{O}_{X}(N)_{|_{x}}    
\end{equation}
is surjective. Hence, the linear system $$|N + h^{*}p_{U}|$$ has no  basepoints supported on $F$. Now, we define the following subset $S \subseteq \operatorname{Pic}^{0}(Y)$ by 
\[
S := \left\{ p \in \operatorname{Pic}^{0}(Y) \;\middle|\; |N + h^{*}p| \text{ has no basepoints supported on } F \right\}.\]
We claim that \( S \) is dense. Indeed, for any nonempty open subset \( U \subseteq \operatorname{Pic}^{0}(Y) \), one can find an element \( p_{U} \in U \) such that \( |N + h^{*}p_{U}| \) has no basepoints supported on \( F \), as proved. Therefore, \( S \cap U \neq \emptyset \) for every nonempty open subset 
\( U \subseteq \operatorname{Pic}^{0}(Y) \). Hence, \( S \) is dense. 
Otherwise, \( \left(\operatorname{Pic}^{0}(Y) \setminus \overline{S}\right) \cap S \neq \emptyset \), 
a contradiction. It follows that 
\[
|N + h^{*}p|
\]
has no basepoints supported on \( F \) for general 
\( p \in \operatorname{Pic}^{0}(Y) \).
\end{proof}
\begin{lemma}\label{lemma2}
Let $Y$ and $Z$ be irregular varieties and  $u: Z \to Y$ be a finite morphism. Let $\mathcal{F}$ be a coherent sheaf on $Z$.
\begin{itemize}
    \item [1)] If $H^{1}(Y, u_{*}\mathcal{F} \otimes p) =0$ for all $p \in \operatorname{Pic}^{0}(Y)$ and $u_{*}\mathcal{F}$ has no essential basepoints, then $\mathcal{F}$ has no essential basepoints. \item[2)]  If $u_{*}\mathcal{F}$ is continuously globally generated, then $\mathcal{F}$ is continuously globally generated.
    \end{itemize}
\end{lemma}
\begin{proof}
Assuming that  $H^{1}(Y, u_{*}\mathcal{F} \otimes p) =0$ for all $p \in \operatorname{Pic}^{0}(Y)$ and that $u_{*}\mathcal{F}$ has no essential basepoints, we will prove that $\mathcal{F}$ has no essential basepoints. Indeed,
we fix $z \in Z$, $y = u(z) \in Y$, and  assume that there exists the following exact sequence:
\begin{equation}\label{seq1}
0 \to \mathcal{K} \to \mathcal{F} \to \C(z) \to 0 \text{.}
\end{equation}
By applying $u_{*}$ to this exact sequence, we obtain: 
\begin{equation}\label{seq2}
0 \to u_{*}\mathcal{K} \to u_{*}\mathcal{F} \to u_{*}\C(z) \to \mathcal{R}^{1}u_{*}\mathcal{K} \to \cdots
\end{equation}
Since the map $u$ is finite, we have $\mathcal{R}^{1}u_{*}\mathcal{K} = 0$. Moreover, it is clear that $u_{*}\C(z) = \C(y)$, so the previous sequence reduces to a short exact sequence of sheaves:
\begin{equation}\label{seq3}
0 \to u_{*}\mathcal{K} \to u_{*}\mathcal{F} \to \C(y) \to 0 \text{.}
\end{equation}
Since  $u_{*}\mathcal{F}$ has no essential basepoints, there exists $p_{y} \in \operatorname{Pic}^{0}(Y)$ such that the map 
\begin{equation}\label{seq4}
H^{0}(Y, u_{*}\mathcal{F} \otimes p_{y}) \to \C(y)  
\end{equation}
is surjective. Twisting the sequence (\ref{seq3}) by $p_{y}$ and  passing to the associated long exact sequence in cohomology, we obtain:
\[
    0 \to H^{0}(Y, u_{*}\mathcal{K} \otimes p_{y}) \to H^{0}(Y, u_{*}\mathcal{F} \otimes p_{y}) \to \C(y)\]
    \[\to H^{1}(Y, u_{*}\mathcal{K} \otimes p_{y}) \to H^{1}(Y, u_{*}\mathcal{F} \otimes p_{y})
\to 0 \text{.} \]
From the assumption that $$H^{1}(Y, u_{*}\mathcal{F} \otimes p_{y}) =0$$ and the surjectivity of the map in (\ref{seq4}), it follows that $$H^{1}(Y, u_{*}\mathcal{K} \otimes p_{y}) = 0 \text{,}$$ which in turn implies $$H^{1}(Z, \mathcal{K} \otimes u^{*}p_{y}) = 0 \text{.}$$ 
We now twist the sequence (\ref{seq1}) by $u^{*}p_{y}$ and pass to the associated long exact sequence in cohomology, obtaining:
\[
0 \to H^{0}(Z, \mathcal{K} \otimes u^{*}p_{y}) \to H^{0}(Z, \mathcal{F} \otimes u^{*}p_{y}) \to \C(z) \to 0 \text{.}
\]
It follows that the map $$H^{0}(Z, \mathcal{F} \otimes u^{*}p_{y}) \to \C(z)$$
is surjective. We conclude that $\mathcal{F}$ has no essential basepoints.    
\par Now, assume that  $u_{*}\mathcal{F}$ is continuously globally generated. That is, for every $y \in Y$ and for every nonempty open subset $U \subset \operatorname{Pic}^{0}(Y)$, the following direct sum of evaluation maps 
\begin{equation}\label{seq5}
\bigoplus_{p \in U} H^{0}(Y, u_{*}\mathcal{F}\otimes p) \otimes p^{\vee} \to u_{*}\mathcal{F}_{|_{y}}
\end{equation}
is surjective. Our goal is to prove that $\mathcal{F}$ is continuously globally generated. Indeed, consider a nonzero element $v \in \mathcal{F}_{|_{z}} \simeq u_{*}(\mathcal{F}_{|_{z}})_{|_{y}}$ where the isomorphism holds because $u$ is finite. This implies that $v \in u_{*}\mathcal{F}_{|_{y}}$. Then, by the surjectivity of the map in (\ref{seq5}), we can find  elements $(p_{i})_{1 \leq i \leq t}$ in $\operatorname{Pic}^{0}(Y)$ and section $(s_{i})_{1\leq i \leq t}$ in $H^{0}(Y, u_{*}\mathcal{F} \otimes p_{i})$ such that $s_{i}(y) \neq 0$ for all $i$ with  $1 \leq i \leq t$. By the isomorphism $$H^{0}(Y, u_{*}\mathcal{F}\otimes p_{i}) \simeq H^{0}(Z, \mathcal{F} \otimes u^{*}p_{i}) $$
which holds for all $i$ with $1 \leq i \leq t$, we can find nonzero sections  $(u_{i})_{1\leq i \leq t}$  of $H^{0}(Z, \mathcal{F} \otimes u^{*}p_{i})$
such that $v$ has a preimage under the following sum of evaluation map  defined by $(u_{i})_{1\leq i \leq t}$:
\begin{equation}\label{seq6}
\bigoplus_{p \in U} H^{0}(Z, \mathcal{F}\otimes u^{*}p) \otimes u^{*}p^{\vee} \to \mathcal{F}_{|_{z}} \text{.}
\end{equation}
Thus, we conclude that $\mathcal{F}$ is continuously globally generated.
\end{proof}
\begin{lemma}\label{lemma3}
Let $X$ be an irregular variety, and $h:X \to Y$ be a morphism to an Abelian variety $Y$. Let $D$ be a nef and big divisor on $X$. If $h_{*}\mathcal{O}_{X}(K_{X} +D) \neq 0$, then $h_{*}\mathcal{O}_{X}(K_{X} +D)$ satisfies $IT$ with index $0$.
\end{lemma}
\begin{proof}
Applying Kawamata-Viehweg's vanishing theorem (\cite[Theorem 1]{kawamata-viehweg}, \cite[Corollary 0.3]{viehwegvanishing}), we have  $$H^{i}(X, \mathcal{O}_{X}(K_{X} +D) \otimes h^{*}p) = 0 \hspace{0.2cm} \text{for all } i\geq 1 \hspace{0.2cm} \text{and for all }  p \in \operatorname{Pic}^{0}(Y) \text{.}$$
Furthermore, by the relative Kawamata-Viehweg's vanishing theorem (\cite[Theorem 2.2.1]{BMSA}): $$\mathcal{R}^{i}h_{*}(\mathcal{O}_{X}(K_{X} + D) \otimes h^{*}p) = 0 \hspace{0.2cm} \text{for all } i \geq 1 \hspace{0.2cm} \text{and for all }  p \in \operatorname{Pic}^{0}(Y).$$ Thus, it follows from  Leray's spectral sequence argument that \[
H^{i}(Y, h_{*}\mathcal{O}_{X}(K_{X} + D) \otimes p) = 0 \hspace{0.2cm} \text{for all }  i\geq 1 \hspace{0.2cm} \text{and for all }  p \in  \operatorname{Pic}^{0}(Y)\text{.}
\]
Consequently, $h_{*}\mathcal{O}_{X}(K_{X} +D)$ satisfies $IT$ with index $0$.
\end{proof}
\begin{proposition}\label{newpropo}
Let $X$ be an irregular variety, and $h:X \to Y$ be a morphism to an Abelian variety $Y$. Let $N$ and $M$ be divisors on $X$. If a point $x \in X$ is not a basepoint of $|N + h^{*}p|$ for  general $p \in \operatorname{Pic}^{0}(Y)$,
and $x$ is not a basepoint of $|M + h^{*}p|$ for  general $p \in  \operatorname{Pic}^{0}(Y)$, then $x$ is not a base point of $|N +M|$.
\end{proposition}
\begin{proof}
By assumption, \( x \in X \) is not a base point of \( |M + h^{*}p| \) for general \( p \in \operatorname{Pic}^{0}(Y) \). Since the map \( p \mapsto p^{\vee} \) is an automorphism of \( \operatorname{Pic}^{0}(Y) \), the image of a general point under this map is again a general point. Therefore, \( x \) is not a base point of \( |M + h^{*}p^{\vee}| \) for general \( p \in \operatorname{Pic}^{0}(Y) \). Hence, \( x \) is not a base point of \( |N + M| \).
\end{proof}
\begin{theorem}\label{maintheorem1}
Let $X$ be an irregular variety of dimension $n\geq 2$ with Albanese dimension $1 \leq \alpha(X)<n$. Let $X \xrightarrow{f} Z \xrightarrow{u} \operatorname{alb}(X) \subseteq \operatorname{Alb}(X)$ be the Stein factorization of the Albanese morphism $\operatorname{alb}$, and let $F$ be a general fiber of the morphism $f$. Let $D$ be an ample divisor on $X$. 
\par If  Conjecture \ref{fujitaconjecture} holds in dimension $<n$, then $|K_{X} + mD + \operatorname{alb}^{*}p|$ has no  basepoint supported on $F$ for all $m \geq n-\alpha(X)+1$ and for general $p \in \operatorname{Pic}^{0}(\operatorname{Alb}(X))$.
\par Additionally, if the following condition is satisfied:
\begin{itemize}
    \item[$(*)$]There exists an integer $r$ with $1\leq r \leq \alpha(X)$ such that $|rD_{|_{F}}|$ is basepoint-free and $rD - K_{X}$ is nef and big,
\end{itemize}
then $|K_{X} + mD|$ has no  basepoint supported on $F$ for all $m \geq n+1$.
\end{theorem}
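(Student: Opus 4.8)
The plan is to exploit the Stein factorization $X\xrightarrow{f}Z\xrightarrow{u}\operatorname{alb}(X)$ of Diagram (\ref{diagram}): push $K_X+mD$ (and, for part (2), also $rD$) forward by $\operatorname{alb}$, use vanishing to make the pushforward satisfy $IT_{0}$, descend the resulting positivity to $f_{*}$ through the finite map $u$, and finally restrict to a smooth general fibre $F$, on which the induction hypothesis (Fujita in dimension $\dim F=n-\alpha(X)<n$) applies. Concretely, for part (1): since $mD+\operatorname{alb}^{*}p$ is ample for all $m\geq1$ and all $p$, the relative Kodaira vanishing theorem gives $R^{i}\operatorname{alb}_{*}(K_X+mD)=0$ for $i>0$, and the Leray spectral sequence together with Kodaira vanishing on $X$ gives $H^{i}(\operatorname{Alb}(X),\operatorname{alb}_{*}(K_X+mD)\otimes p)\cong H^{i}(X,K_X+mD+\operatorname{alb}^{*}p)=0$ for $i>0$; hence $\operatorname{alb}_{*}(K_X+mD)$ satisfies $IT$ with index $0$, so by Remark \ref{remarkproperties}, Proposition \ref{propoMregularity} and Proposition \ref{haconchenproposition} it is continuously globally generated and, once nonzero, has no essential base points. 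For $m\geq n-\alpha(X)+1$, applying Fujita in dimension $n-\alpha(X)$ to $F$ (where $K_X|_{F}=K_{F}$ and $D|_{F}$ is ample) shows $|(K_X+mD)|_{F}|$ is base point free, so in particular $H^{0}(F,(K_X+mD)|_{F})\neq0$ and $\operatorname{alb}_{*}(K_X+mD)$ is nonzero by cohomology and base change. Finally, as $u$ is finite, $\operatorname{alb}_{*}=u_{*}\circ f_{*}$ with $u_{*}$ exact, $H^{0}(Z,f_{*}(K_X+mD)\otimes u^{*}p)=H^{0}(\operatorname{alb}(X),\operatorname{alb}_{*}(K_X+mD)\otimes p)$, and a surjection $f_{*}(K_X+mD)\to\C(z)$ pushes to one $\operatorname{alb}_{*}(K_X+mD)\to\C(u(z))$; hence $f_{*}(K_X+mD)$ is continuously globally generated and has no essential base points for all $m\geq n-\alpha(X)+1$.

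To conclude part (1) I would translate ``base point on $F$'' into an evaluation of this pushforward: for $z:=f(F)$ general, $f_{*}(K_X+mD)$ is locally free near $z$ and $R^{i}f_{*}(K_X+mD)=0$ for $i>0$, so by cohomology and base change the restriction map
\[
H^{0}(X,K_X+mD+\operatorname{alb}^{*}p)=H^{0}(Z,f_{*}(K_X+mD)\otimes u^{*}p)\longrightarrow H^{0}(F,(K_X+mD)|_{F})
\]
is identified with evaluation of $f_{*}(K_X+mD)\otimes u^{*}p$ at $z$. Since $|(K_X+mD)|_{F}|$ is base point free, it suffices to know that for $p$ in a dense open subset of $\operatorname{Pic}^{0}(\operatorname{Alb}(X))$ this map is surjective; since $u$ is finite, it is enough that $\operatorname{alb}_{*}(K_X+mD)\otimes p$ be globally generated at the single point $u(z)$. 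This is the step I expect to be the main obstacle: the absence of essential base points only yields, for each point, \emph{some} $p$ that works, and one must promote this to a statement valid for all $p$ in a dense open set at the fixed point $u(z)$; I would do this using the $M$-regularity of $\operatorname{alb}_{*}(K_X+mD)$ (rather than just its continuous global generation) together with the semicontinuity over $\operatorname{Pic}^{0}(\operatorname{Alb}(X))$ of $h^{0}(\operatorname{alb}_{*}(K_X+mD)\otimes p)$, which is constant by $IT_{0}$. Granting this, $|K_X+mD+\operatorname{alb}^{*}p|$ has no base point supported on $F$ for $m\geq n-\alpha(X)+1$ and $p$ general, which is (1).

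For part (2), assume condition (2). Since $rD-K_X$ is nef and big on $X$, Kodaira's lemma gives that $(rD-K_X)|_{F}$ is big on the general fibre $F$, so $rD-K_X$ is $\operatorname{alb}$-nef and $\operatorname{alb}$-big; applying the relative Kawamata--Viehweg vanishing theorem to $rD=K_X+(rD-K_X)$ gives $R^{i}\operatorname{alb}_{*}(rD)=0$ for $i>0$, and Kawamata--Viehweg on $X$ gives $H^{i}(\operatorname{Alb}(X),\operatorname{alb}_{*}(rD)\otimes p)\cong H^{i}(X,K_X+(rD-K_X)+\operatorname{alb}^{*}p)=0$ for $i>0$, since $(rD-K_X)+\operatorname{alb}^{*}p$ is still nef and big. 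Thus $\operatorname{alb}_{*}(rD)$ satisfies $IT_{0}$, and it is nonzero because $|rD|_{F}|$ is base point free, so $H^{0}(F,rD|_{F})\neq0$. Running the argument of part (1) verbatim, with $|rD|_{F}|$ base point free in place of $|(K_X+mD)|_{F}|$, then gives that $|rD+\operatorname{alb}^{*}p|$ has no base point supported on $F$ for $p$ general. Finally, for $m\geq n+1$ one has $m-r\geq n+1-\alpha(X)\geq n-\alpha(X)+1$ since $r\leq\alpha(X)$, so part (1) applies with $m-r$ in place of $m$: $|K_X+(m-r)D+\operatorname{alb}^{*}q|$ has no base point supported on $F$ for $q$ general. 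Fixing a general $q$ for which this and $|rD+\operatorname{alb}^{*}(-q)|$ both have no base point on $F$, for each $x\in F$ one picks $s_{1}\in H^{0}(X,K_X+(m-r)D+\operatorname{alb}^{*}q)$ and $s_{2}\in H^{0}(X,rD+\operatorname{alb}^{*}(-q))$ with $s_{1}(x)\neq0$, $s_{2}(x)\neq0$; then $s_{1}s_{2}\in H^{0}(X,K_X+mD)$ (the twists cancel) and $(s_{1}s_{2})(x)\neq0$, so $x\notin\operatorname{Bs}|K_X+mD|$. Hence $|K_X+mD|$ has no base point supported on $F$ for all $m\geq n+1$.
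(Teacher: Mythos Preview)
Your proposal is correct and follows essentially the same route as the paper: Kodaira (resp.\ Kawamata--Viehweg) vanishing yields $IT_0$ for $\operatorname{alb}_*(K_X+mD)$ (resp.\ $\operatorname{alb}_*(rD)$), hence continuous global generation and absence of essential base points; these are transported through the finite map $u$ to $f_*$, combined with Fujita on the fibre $F$ via cohomology and base change, and for part~(2) the twists by $\pm p$ are cancelled by multiplying sections. The one step you flag as an obstacle---upgrading continuous global generation to freeness for \emph{general} $p$ at the fixed fibre---is glossed over in the paper with the word ``Equivalently''; your proposed fix via constancy of $h^0$ (from $IT_0$) plus semicontinuity is exactly what makes this go through.
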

\begin{proof}
From the hypothesis, Conjecture \ref{fujitaconjecture} holds for lower-dimensional varieties, meaning that $|K_{F}+ mD_{|_{F}}|$ is basepoint-free for all $m \geq n - \alpha(X) +1$. In particular,
$$h^{0}(F, \omega_{F} \otimes \mathcal{O}_{F}(mD)) \neq 0 \hspace{0.2cm} \text{for all }  m \geq n-\alpha(X)+1 \text{,}$$
since $(K_{X} +mD)_{|_{F}} = K_{F} + mD_{|_{F}}$.
Thus, $f_{*}\mathcal{O}_{X}(K_{X} + mD)$ is a nonzero coherent sheaf on $Z$ for all $m \geq n- \alpha(X)+1$. Therefore, $\operatorname{alb}_{*}\mathcal{O}_{X}(K_{X} +mD)$ is a nonzero coherent sheaf for all $m \geq n-\alpha(X)+1$, as the map $u$ is finite. 
\par Applying Lemma \ref{lemma3}, it follows that  $\operatorname{alb}_{*}\mathcal{O}_{X}(K_{X} +mD)$ satisfies $IT$ with index $0$. By Remark \ref{remarkproperties} and Proposition \ref{propoMregularity}, we deduce that $\operatorname{alb}_{*}\mathcal{O}_{X}(K_{X} +mD)$ is continuously globally generated.
\\Applying Lemma \ref{lemma2}, we conclude that $f_{*}\mathcal{O}_{X}(K_{X} + mD)$ is continuously globally generated. Again, by assumption, we know that $|K_{F}+ mD_{|_{F}}|$ is basepoint-free for all $m \geq n - \alpha(X) +1$. Then, applying Lemma \ref{lemma1}, we deduce that  $|K_{X} + mD + \operatorname{alb}^{*}p|$ has no  basepoint supported on $F$ for all $m \geq n-\alpha(X)+1$ and for general $p \in \operatorname{Pic}^{0}(\operatorname{Alb}(X))$. This proves the first statement.
\par Now, suppose that condition $(*)$ is satisfied. In particular, $|rD_{|_{F}}|$ is basepoint-free, and 
\[
h^{0}(F, \mathcal{O}_{F}(rD)) \neq 0.
\]
Therefore, $f_{*}\mathcal{O}_{X}(rD)$ is a nonzero sheaf on $Z$, and so is $\operatorname{alb}_{*}\mathcal{O}_{X}(rD)$. We  observe that
\[
rD = K_{X} + rD -K_{X}.
\]
Thus, applying Lemma \ref{lemma3}, we obtain that  $\operatorname{alb}_{*}\mathcal{O}_{X}(rD)$ satisfies $IT$ with index $0$, and therefore,  $\operatorname{alb}_{*}\mathcal{O}_{X}(rD)$ is continuously globally generated. Applying Lemma \ref{lemma2}, we deduce that $f_{*}\mathcal{O}_{X}(D)$ is continuously globally generated.
\\Again, by assumption, we know that $|rD_{|_{F}}|$ is basepoint-free. Then, applying Lemma \ref{lemma1}, we conclude that 
  $$|rD + \operatorname{alb}^{*}p|$$ has no basepoint supported on a general fiber $F$  for general $p \in \operatorname{Pic}^{0}(\operatorname{Alb}(X))$.
\par Finally, applying Proposition \ref{newpropo} to $|K_{X} + mD + \operatorname{alb}^{*}p|$ and $|rD + \operatorname{alb}^{*}p|$, we deduce that  $$|K_{X} + m^{'}D|$$ has no  basepoints supported on $F$ for all $m^{'} \geq n + r-\alpha(X)+1$, and in particular, for all $m^{'} \geq n+1$. 
\end{proof}
\begin{remark}
In Theorem \ref{maintheorem1}, the condition that $rD - K_{X}$ is nef and big for some $r$ with $1 \leq r \leq \alpha(X)$  is always satisfied if  $X$ is an irregular variety with a nef anticanonical bundle.
\end{remark}
\par The following basic example in dimension $2$ illustrates that  Condition $(*)$ in Theorem \ref{maintheorem1} can be weaker than requiring $D$ to be globally generated. 
\begin{example}\label{simpleexample}
Let $\mathcal{E}$ be a rank $2$-vector bundle on elliptic curve $C$ given by the following non-split short exact sequence:
\begin{equation}\label{exactseq}
0 \to \mathcal{O}_{C} \to \mathcal{E} \to \mathcal{O}_{C} \to 0  \text{.}
\end{equation}
We take the ruled surface \( X = \mathbb{P}(\mathcal{E}) \xrightarrow{f} C \), where $F$ is a fiber. We know that the canonical line bundle \( \omega_{X} = \mathcal{O}_{X}(-2) \), which means \( \omega_{X}^{\vee} = \mathcal{O}_{X}(2) \) is nef. Additionally, as another feature of this example, it turns out that the tangent bundle $T_{X}$ is nef. Furthermore, the irregularity \( q(X) = \alpha(X) = 1 \) since \( h^{1}(X, \mathcal{O}_{X}) = h^{1}(C, \mathcal{O}_{C}) = 1 \). Let \( D \) be a divisor such that \( \mathcal{O}_{X}(D) := \mathcal{O}_{X}(1) \otimes \mathcal{O}_{X}(F) \), and we observe that \( D \) is ample. However, we claim that it is not basepoint-free.
\begin{proof}[Proof of the claim]
Since \( H^{0}(X, \mathcal{O}_{X}(D)) = H^{0}(C, f_{*}\mathcal{O}_{X}(D)) = H^{0}(C, \mathcal{E} \otimes \mathcal{O}_{C}(x)) \) (where \( x \) is a point on \( C \)), and by twisting the short exact sequence (\ref{exactseq}) by \( \mathcal{O}_{C}(x) \), we obtain the following long exact sequence of cohomology:
\[
    0 \to H^{0}(C, \mathcal{O}_{C}(x)) \to H^{0}(C, \mathcal{E} \otimes \mathcal{O}_{C}(x)) \to H^{0}(C, \mathcal{O}_{C}(x)) \to H^{1}(C, \mathcal{O}_{C}(x)) \to \dots
\]
But since \( H^{1}(C, \mathcal{O}_{C}(x)) = 0 \), we conclude that \( h^{0}(C, \mathcal{E} \otimes \mathcal{O}_{C}(x)) = 2 \). Now, let \( B \) be a divisor such that \( \mathcal{O}_{X}(B) = \mathcal{O}_{X}(1) \). Then, we have the following short exact sequence:
\[
0 \to \mathcal{O}_{X}(F) \to \mathcal{O}_{X}(D) \to \mathcal{O}_{B}(D) \to 0,
\]
which induces the following long exact sequence on cohomology groups:
\begin{equation}\label{exactseq1}
0 \to H^{0}(X, \mathcal{O}_{X}(F)) \to H^{0}(X, \mathcal{O}_{X}(D)) \to H^{0}(B, \mathcal{O}_{B}(D)) \to H^{1}(X, \mathcal{O}_{X}(F)) \to \dots
\end{equation}
By the Kodaira vanishing theorem, we see  that $$h^{1}(X, \mathcal{O}_{X}(F)) = h^{1}(X, \mathcal{O}_{X}(K_{X} + 2B + F)) = 0$$ since $2B + F$ is ample. Also, we have $$h^{0}(X, \mathcal{O}_{X}(F)) = h^{0}(B, \mathcal{O}_{B}(D)) = 1.$$ The previous sequence $(\ref{exactseq1})$ becomes a short exact sequence:
\[
0 \to H^{0}(X, \mathcal{O}_{X}(F)) \to H^{0}(X, \mathcal{O}_{X}(D)) \to H^{0}(B, \mathcal{O}_{B}(D)) \to 0
\]
Finally, we fix two linearly independent sections of \( H^{0}(X, \mathcal{O}_{X}(D)) \) and restrict them to \( B \). These sections induce a nonzero section \( s \), which vanishes on $D_{|_{B}}$. Therefore, $D$ is not basepoint-free.
\end{proof}
We remark that $D$ satisfies the condition $(*)$ in Theorem \ref{maintheorem1}. Indeed, $\mathcal{O}_{F}(\alpha(X)D) = \mathcal{O}_{\mathbb{CP}^{1}}(1)$, which is ample and globally generated. Also, $$\mathcal{O}_{X}(\alpha(X)D - K_{X}) = \mathcal{O}_{X}(3B + F)$$ is ample on $X$. This example shows that there exists a divisor $D$ satisfying condition $(*)$ in Theorem \ref{maintheorem1} and not necessarily globally generated. Furthermore, $$K_{X} + mD = (m-2)B + mF$$  is globally generated, for all $m \geq 3$. 
\end{example}
\begin{corollary}\label{mainresult2}
Let $X$ be an irregular variety of dimension $n\geq 3$ with Albanese dimension $1 \leq \alpha(X) < n$ and $K_{X}$ is ample. Let $X \xrightarrow{f} Z \xrightarrow{u} \operatorname{alb}(X) \subseteq \operatorname{Alb}(X)$ be the Stein factorization of the Albanese morphism $\operatorname{alb}$, and let $F$ be a general fiber of the morphism $f$. If both $|mK_{F}|$ and $|rK_{F}|$ are basepoint-free for all $m \geq n + 2 -\alpha(X)$ and for some $r$ with $1 < r \leq \alpha(X)$, then $|mK_{X}|$ has no  basepoints supported on $F$ for all $m \geq n+2$.  
\end{corollary}
\begin{proof}
Apply  Theorem \ref{maintheorem1} for $D = K_{X}$.
\end{proof}
\begin{remark}
    If $r \geq n+2-\alpha(X)$ in Corollary \ref{mainresult2}, then it suffices to assume the basepoint-freeness of $|mK_{F}|$ for all $m \geq n + 2 - \alpha(X)$.
\end{remark}
\par An interesting situation arises when the Albanese map is a  locally trivial fibration. In this case, we can simplify the setting of Theorem \ref{maintheorem1}. For instance, the Stein factorization is trivial, and  under the assumption of condition $(*)$ in Theorem \ref{maintheorem1}, we conclude that the linear system  $|K_{X} + mD|$ is basepoint-free for all $m \geq n+1$. As an example of this  situation, we state the following theorem.
\begin{theorem}
    \label{supplement}
Let $X$ be an irregular variety of dimension $n \geq 2$ with $-K_{X}$ nef. Let $\operatorname{alb}: X \to \operatorname{Alb}(X)$ be the Albanese map, and let $D$ be an ample divisor on $X$. If Conjecture \ref{fujitaconjecture} holds in dimension $<n$ and there exists an integer $r$ with $1\leq r \leq \alpha(X)$ such that $|rD_{|_{F}}|$ is basepoint-free for every fiber $F$ of $\operatorname{alb}$, then Conjecture \ref{fujitaconjecture} holds for $X$.
\end{theorem}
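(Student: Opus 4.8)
The plan is to run the proof of Theorem~\ref{maintheorem1}, simplified and strengthened by Cao's theorem \cite{Cao}: since $X$ is irregular with $-K_{X}$ nef, the Albanese map $\operatorname{alb}: X\to\operatorname{Alb}(X)$ is a \emph{locally trivial} fibration. Thus $\operatorname{alb}(X)=\operatorname{Alb}(X)=:A$ is an abelian variety of dimension $\alpha:=\alpha(X)$, the Stein factorization of $\operatorname{alb}$ is trivial ($Z=A$ and $u=\operatorname{id}$ in Diagram~(\ref{diagram})), all fibers are isomorphic to a fixed smooth $F$ with $\dim F=n-\alpha$, and $\operatorname{alb}$ being smooth with $\omega_{A}=\mathcal O_{A}$ yields $\omega_{X/A}=\omega_{X}$ and $K_{F_{a}}=(K_{X})|_{F_{a}}$ for every $a\in A$. (If $\alpha=n$ then the fiber is a point, $X\simeq A$ is an abelian variety, and the assertion is classical; so I assume $1\le\alpha<n$.) I also record that, $-K_{X}$ being nef and $D$ ample, $rD-K_{X}$ is ample, hence nef and big, for every $r\ge1$, so the hypothesis ``$|rD|_{F}|$ base point free for some $1\le r\le\alpha$'' is exactly condition $(2)$ of Theorem~\ref{maintheorem1}. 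Fix $m\ge n+1$ and set $m':=m-r$, so $m'=m-r\ge n+1-\alpha=\dim F+1\ge2$.

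For $k\ge1$ put $\mathcal F_{k}:=\operatorname{alb}_{*}\mathcal O_{X}(K_{X}+kD)$ and $\mathcal G:=\operatorname{alb}_{*}\mathcal O_{X}(rD)$. Using $\omega_{X/A}=\omega_{X}$, the $\operatorname{alb}$-ampleness of $kD$, and $\mathcal O_{X}(rD)=\omega_{X}\otimes\mathcal O_{X}(rD-K_{X})$ with $rD-K_{X}$ $\operatorname{alb}$-nef and big, the relative Kodaira, resp.\ Kawamata-Viehweg, vanishing theorem gives $R^{i}\operatorname{alb}_{*}\mathcal O_{X}(K_{X}+kD)=R^{i}\operatorname{alb}_{*}\mathcal O_{X}(rD)=0$ for all $i>0$. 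Combining this with the Leray spectral sequence, the projection formula, and Kodaira/Kawamata-Viehweg vanishing on $X$ — valid since $kD+\operatorname{alb}^{*}p$ is ample and $rD+\operatorname{alb}^{*}p-K_{X}$ is numerically equivalent to the ample $rD-K_{X}$, hence nef and big, for every $p\in\operatorname{Pic}^{0}(A)$ — I get that $\mathcal F_{k}$ and $\mathcal G$ satisfy $IT$ with index $0$. By Remark~\ref{remarkproperties}, Proposition~\ref{propoMregularity} and Proposition~\ref{haconchenproposition} they are then $M$-regular, continuously globally generated and free of essential base points, and $h^{0}(A,\mathcal F_{k}\otimes p)$, $h^{0}(A,\mathcal G\otimes p)$ do not depend on $p$ (they equal the respective Euler characteristics). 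Furthermore, local triviality makes $\operatorname{alb}$ flat, so the vanishing of $R^{>0}\operatorname{alb}_{*}$ forces $\mathcal F_{k}$ and $\mathcal G$ to be locally free with cohomology and base change over every $a\in A$; thus $\mathcal F_{k}\otimes\kappa(a)\cong H^{0}(F_{a},K_{F_{a}}+kD|_{F_{a}})$ and $\mathcal G\otimes\kappa(a)\cong H^{0}(F_{a},rD|_{F_{a}})$. For $k\ge\dim F+1$ the line bundle $K_{F_{a}}+kD|_{F_{a}}$ is base point free on $F_{a}$ (of dimension $n-\alpha<n$) by Conjecture~\ref{fujitaconjecture}, while $rD|_{F_{a}}$ is base point free on $F_{a}$ by hypothesis; in particular $\mathcal F_{k}$ (for $k\ge\dim F+1$) and $\mathcal G$ are nonzero.

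Now fix an arbitrary $x\in X$, put $a:=\operatorname{alb}(x)$, and apply the above with $k=m'$. Since $K_{F_{a}}+m'D|_{F_{a}}$ is base point free on $F_{a}$ and $\mathcal F_{m'}\otimes\kappa(a)\cong H^{0}(F_{a},K_{F_{a}}+m'D|_{F_{a}})$, evaluation at $x$ defines a surjection $\mathcal F_{m'}\twoheadrightarrow\C(a)$ of sheaves on $A$. As $\mathcal F_{m'}$ has no essential base points, some $p\in\operatorname{Pic}^{0}(A)$ makes $H^{0}(A,\mathcal F_{m'}\otimes p)\to\C(a)$ nonzero; via $H^{0}(A,\mathcal F_{m'}\otimes p)\cong H^{0}(X,K_{X}+m'D+\operatorname{alb}^{*}p)$ this says there is a section of $K_{X}+m'D+\operatorname{alb}^{*}p$ not vanishing at $x$. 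Since $h^{0}(A,\mathcal F_{m'}\otimes p)$ is constant in $p$, the upper-semicontinuity argument of the proof of Proposition~\ref{deuxiemeproposition} upgrades this to: for \emph{general} $p\in\operatorname{Pic}^{0}(A)$ there is $s\in H^{0}(X,K_{X}+m'D+\operatorname{alb}^{*}p)$ with $s(x)\ne0$. The identical argument for $\mathcal G$ (using base point freeness of $rD|_{F_{a}}$) gives: for general $q\in\operatorname{Pic}^{0}(A)$ there is $t\in H^{0}(X,rD+\operatorname{alb}^{*}q)$ with $t(x)\ne0$. As $\operatorname{Pic}^{0}(A)$ is irreducible, choose $p$ general with $q:=-p$ also general; then
\[
s\otimes t\ \in\ H^{0}\!\big(X,\,(K_{X}+m'D+\operatorname{alb}^{*}p)+(rD+\operatorname{alb}^{*}q)\big)=H^{0}(X,K_{X}+mD),\qquad (s\otimes t)(x)=s(x)\,t(x)\ne0.
\]
Hence $x\notin\operatorname{Bs}|K_{X}+mD|$; as $x$ and $m\ge n+1$ were arbitrary, $|K_{X}+mD|$ is base point free for all $m\ge n+1$, that is, Conjecture~\ref{fujitaconjecture} holds for $X$.

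I do not expect any single step to be a genuine obstacle: the proof is an assembly of relative vanishing, the generic vanishing / $M$-regularity dictionary (Remark~\ref{remarkproperties}, Propositions~\ref{propoMregularity}, \ref{haconchenproposition}, \ref{deuxiemeproposition}), and Cao's structure theorem. The point demanding care is keeping the reduction to the fiber valid at \emph{every} point of $X$, and this is precisely what the locally trivial hypothesis provides: it forces $\mathcal F_{k}$ and $\mathcal G$ to be locally free with base change over all of $A$, so the fiberwise base point freeness supplied by Conjecture~\ref{fujitaconjecture} and by the hypothesis on $|rD|_{F}|$ is available on \emph{each} $F_{a}$, not just a general one — which is what turns the ``no base point on the general fiber'' conclusion of Theorem~\ref{maintheorem1} into genuine base point freeness. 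The other delicate step is the passage from ``$\mathcal F_{m'}$ has no essential base points'' (which yields only \emph{some} twist $p$) to the statement for general $p$: this uses the constancy of $h^{0}(A,\mathcal F_{m'}\otimes p)$, automatic since $\mathcal F_{m'}$ satisfies $IT$ with index $0$; without it the two genericity conditions — ``$p$ good for $\mathcal F_{m'}$'' and ``$-p$ good for $\mathcal G$'' — could not be imposed at once, and the cancellation of the auxiliary twist $\operatorname{alb}^{*}p$ would fail.
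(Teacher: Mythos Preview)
Your proof is correct and follows essentially the same approach as the paper: Cao's theorem makes $\operatorname{alb}$ a locally trivial fibration over $\operatorname{Alb}(X)$, relative and absolute Kodaira/Kawamata--Viehweg vanishing give $IT_{0}$ for $\operatorname{alb}_{*}(K_{X}+m'D)$ and $\operatorname{alb}_{*}(rD)$, base change over every fiber (via local triviality) together with the inductive hypothesis and the assumption on $|rD_{|F}|$ yield base-point-freeness of $|K_{X}+m'D+\operatorname{alb}^{*}p|$ and $|rD+\operatorname{alb}^{*}p^{\vee}|$ for general $p$, and multiplying sections cancels the twist. The only cosmetic difference is that the paper invokes continuous global generation directly (Proposition~\ref{propoMregularity}) to reach the ``general $p$'' statement, whereas you pass through ``no essential base points'' and then upgrade via the constancy of $h^{0}$ argument of Proposition~\ref{deuxiemeproposition}.
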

\begin{proof}
    If $-K_{X}$ is nef, then, by \cite[Theorem 1.2]{Cao}, the Albanese map $\operatorname{alb}$ is a  locally trivial fibration. Hence, all the fibers are isomorphic.
\par Note that we may assume the fibers of  $\operatorname{alb}$ have positive dimension, that is, $\alpha(X) < n$. Indeed, if $-K_{X}$ is nef and $\alpha(X) = n$ for a variety $X$, then $X$ is an abelian variety. In that case, $K_{X} =0$ and $|2D|$ is basepoint-free.
\par By assumption, Conjecture \ref{fujitaconjecture} holds in low-dimensions. Then, the linear system $$|K_{F} +mD_{|_{F}}|$$ is basepoint-free for all $m \geq n-\alpha(X)+1$ and for every fiber $F$. In particular,
$$h^{0}(F, \omega_{F} \otimes \mathcal{O}_{F}(mD)) \neq 0 \hspace{0.2cm} \text{for all }  m \geq n-\alpha(X)+1 \text{.}$$
Thus, $\operatorname{alb}_{*}\mathcal{O}_{X}(K_{X} +mD)$ is a nonzero coherent sheaf for all $m \geq n-\alpha(X)+1$. 
\\Furthermore, $\operatorname{alb}_{*}\mathcal{O}_{X}(K_{X} + mD)$ satisfies $IT$ with index $0$, by Lemma \ref{lemma3}. Consequently, by Remark \ref{remarkproperties} and Proposition \ref{propoMregularity}, it is continuously globally generated. Applying Lemma \ref{lemma1}, we deduce that 
 $$|K_{X} + mD + \operatorname{alb}^{*}p|$$ is basepoint-free for general $p \in \operatorname{Pic}^{0}(\operatorname{Alb}(X))$ and for all $m \geq n-\alpha(X)+1$. 
\par Since $-K_{X}$ is nef by assumption, the divisor $rD-K_{X}$ is ample. Moreover, by assumption, $|rD_{|_{F}}|$ is basepoint-free for some $r$ with $1\leq r \leq \alpha(X)$. By the base change theorem, this implies that $\operatorname{alb}_{*}\mathcal{O}_{X}(rD)$ is a nonzero locally free sheaf. Furthermore, it satisfies $IT$ with index $0$, by Lemma \ref{lemma3}. 
\\Therefore, by Remark \ref{remarkproperties} and Proposition \ref{propoMregularity}, $\operatorname{alb}_{*}\mathcal{O}_{X}(rD)$ is continuously globally generated. Applying Lemma \ref{lemma1} again, we deduce that $$|rD + \operatorname{alb}^{*}p|$$ is basepoint-free for general $p \in \operatorname{Pic}^{0}(\operatorname{Alb}(X))$. Thus, applying Proposition \ref{newpropo} to $|K_{X} + mD + \operatorname{alb}^{*}p|$ and $|rD + \operatorname{alb}^{*}p|$, we conclude that Conjecture \ref{fujitaconjecture} holds for $X$.
\end{proof}
\begin{example}\label{6dimensionexample}
    Since Fujita's Freeness Conjecture \ref{fujitaconjecture} holds for varieties of dimension $5$ by \cite{feizhu}, then by applying Theorem \ref{supplement}, it follows that the conjecture holds for irregular varieties of dimension $6$ with $-K_{X}$ nef, provided that  $|rD_{|_{F}}|$ is basepoint-free for some $r$ satisfying  $1\leq r  \leq \alpha(X)$. Here, $F$ denotes a fiber of $\operatorname{alb}$.
\end{example}
\begin{example}
    More generally than in Example \ref{6dimensionexample}, if $X$ is an irregular variety of dimension $n$ with  $-K_{X}$ nef, $\alpha(X) \geq n-5$, and if $|rD_{|_{F}}|$ is basepoint-free for some $r$ such that  $1\leq r  \leq \alpha(X)$, then Conjecture \ref{fujitaconjecture} holds for $X$.
\end{example}
\begin{example}
Example \ref{simpleexample} is also an instance of Theorem \ref{supplement}. 
\end{example}
\begin{remark}
In Theorem \ref{supplement}, the condition that $|rD_{|_{F}}|$ is basepoint-free for some $r$ satisfying  $1\leq r  \leq \alpha(X)$ is equivalent  to the induction hypothesis if $\alpha(X) \geq \frac{n+1}{2}$ and the fiber $F$ of $\operatorname{alb}: X \to \operatorname{Alb(X)}$ is a $K$-trivial variety ($K_{F}=0$). Indeed, take $r := n- \alpha(X) +1$, and  note that $$(n -\alpha(X)+1)D_{|_{F}} = K_{F} + (n-\alpha(X)+1)(D_{|_{F}}) \text{.}$$
\end{remark}
\section{Basepoint-Freeness of
Adjoint Series for Varieties Fibered over Abelian Varieties}
\par In the following proposition, we consider the base to be an abelian variety, not necessarily the Albanese variety.  We are interested in studying the linear system defined by the canonical sheaf twisted by an ample line bundle from the base, under the assumption that the morphism is an algebraic fiber space and that a general fiber has a basepoint-free canonical bundle.
\begin{proposition}\label{kollarvanishing}
    Let $h: X \to Y$ be a surjective morphism with connected fibers onto an abelian variety $Y$ of dimension $g$, and let $F$ be a general fiber of $h$. If $|K_{F}|$ is basepoint-free and $\Theta$ is an ample divisor on $Y$, then $|K_{X} + 2 h^{*}\Theta|$ has no basepoints supported on $F$.
\end{proposition}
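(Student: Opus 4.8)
The plan is to push the problem onto the abelian variety $Y$ via the Fourier-Mukai/generic vanishing machinery recalled in Section 2, exactly as in the outline of Theorem \ref{maintheorem1}. First I would analyze the pushforward $h_{*}\mathcal{O}_{X}(K_{X})$. Since $h$ is a fiber space onto an abelian variety, by Koll\'ar's theorem the higher direct images $R^{i}h_{*}\omega_{X}$ are torsion-free, and by Hacon's generic vanishing results each $R^{i}h_{*}\omega_{X}$ is a $GV$-sheaf on $Y$; in particular $h_{*}\omega_{X}$ is a $GV$-sheaf. The point of the hypothesis $|K_{F}|$ base point free is that it controls the restriction of $h_{*}\omega_{X}$ to the fiber: by base change (cohomology and base change applies since $h^{0}(F,\omega_{F})$ is what it is for a general fiber, and one can shrink to a suitable open locus over which things are flat and the formation of $h_{*}\omega_X$ commutes with base change), the fiber of $h_{*}\omega_{X}$ at a general point $y$ is $H^{0}(F,\omega_{F})$, and the evaluation $h_{*}\omega_{X}\otimes \C(y)\to \omega_{F}\otimes\C(x)$ is surjective at every $x\in F$ precisely because $|K_{F}|$ is base point free.

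Next I would twist by $2h^{*}\Theta$ and use the projection formula: $h_{*}\mathcal{O}_{X}(K_{X}+2h^{*}\Theta)\simeq h_{*}\omega_{X}\otimes\mathcal{O}_{Y}(2\Theta)$. The key input is that tensoring a $GV$-sheaf on an abelian variety with $\mathcal{O}_{Y}(2\Theta)$, where $\Theta$ is ample, produces an $IT_{0}$ sheaf — indeed $\mathcal{O}_Y(\Theta)$ is $IT_0$ and continuously globally generated, and the standard argument (as in Pareschi--Popa) shows that a $GV$-sheaf twisted by $\mathcal{O}_Y(2\Theta)$ satisfies $IT$ with index $0$, hence is $M$-regular, hence continuously globally generated (Proposition \ref{propoMregularity}), hence has no essential base points (Proposition \ref{haconchenproposition}, or Proposition \ref{premierproposition}). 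From the $IT_0$ property I also get $h^{i}(Y, h_{*}\mathcal{O}_X(K_X+2h^*\Theta)\otimes p)=0$ for $i>0$ and all $p$, and combined with the relative vanishing $R^{i}h_{*}\mathcal{O}_X(K_X+2h^*\Theta)=R^ih_*\omega_X\otimes\mathcal{O}_Y(2\Theta)$ being $IT_0$ as well, a Leray spectral sequence argument gives that $h^{0}(X,\mathcal{O}_X(K_X+2h^*\Theta)\otimes h^*p)$ behaves well and $H^0(Y, h_*\mathcal{O}_X(K_X+2h^*\Theta))$ surjects appropriately.

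Finally I assemble the base-point statement on $X$. Fix $x\in F$ lying over a general $y\in Y$. Continuous global generation of $\mathcal{G}:=h_{*}\mathcal{O}_X(K_X+2h^*\Theta)$ gives, for a general $p\in\operatorname{Pic}^0(Y)$, a section in $H^0(Y,\mathcal{G}\otimes p)$ not vanishing at $y$, i.e. whose image in $\mathcal{G}\otimes\C(y)\cong H^0(F,\omega_F)$ is nonzero; composing with the surjective evaluation $H^0(F,\omega_F)\to \omega_F\otimes\C(x)$ coming from base-point-freeness of $|K_F|$, after possibly changing the chosen section inside $H^0(F,\omega_F)$ (which is fine: $\mathcal{G}\otimes\C(y)$ surjects onto $\omega_F\otimes\C(x)$, so some element of the fiber works), we get an element of $H^0(F,\omega_F)$ not vanishing at $x$ that lifts to $H^0(X,\mathcal{O}_X(K_X+2h^*\Theta)\otimes h^*p)$; and since $h^*p$ is trivial in $\operatorname{Pic}$ up to... no — rather, one notes $H^0(X,\mathcal{O}_X(K_X+2h^*\Theta)\otimes h^*p)=H^0(Y,\mathcal{G}\otimes p)$ and we need $p=\mathcal{O}_Y$; to remove the twist one uses that $\Theta$ ample means we can absorb any $p\in\operatorname{Pic}^0(Y)$ into a translate $\Theta'\equiv\Theta$, so $\mathcal{O}_Y(2\Theta)\otimes p\cong\mathcal{O}_Y(\Theta_1+\Theta_2)$ with each $\Theta_i$ an ample divisor algebraically equivalent to $\Theta$, and re-run the $GV$-twist argument with this new divisor — giving directly a section of $\mathcal{O}_X(K_X+2h^*\Theta)$ (not just up to $\operatorname{Pic}^0$) nonvanishing at $x$. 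Hence $x$ is not a base point.

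The main obstacle I anticipate is the base-change/flatness bookkeeping needed to identify $\mathcal{G}\otimes\C(y)$ with $H^0(F,\omega_F)$ and to see that the evaluation map at $x\in F$ factors as claimed — in particular making sure "general fiber" is good enough and that the $GV$ property of $h_{*}\omega_X$ (Koll\'ar + Hacon) is being invoked correctly; the twisting-by-$2\Theta$ step and the translation trick to kill the $\operatorname{Pic}^0$ ambiguity are standard once that is in place.
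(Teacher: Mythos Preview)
Your argument is solid up to the point where you have, for every $x\in F$ and for a \emph{general} $p\in\operatorname{Pic}^{0}(Y)$, a section of $K_{X}+2h^{*}\Theta+h^{*}p$ not vanishing at $x$. The gap is the last step: your ``translation trick'' does not remove the twist. Writing $\mathcal{O}_{Y}(2\Theta)\otimes p\cong\mathcal{O}_{Y}(\Theta_{1}+\Theta_{2})$ and re-running the argument only produces, for a general $q$, a section of $K_{X}+h^{*}(\Theta_{1}+\Theta_{2})+h^{*}q=K_{X}+2h^{*}\Theta+h^{*}(p+q)$, i.e.\ again a generic $\operatorname{Pic}^{0}$-twist of the target line bundle. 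In other words, if $B_{x}\subset\operatorname{Pic}^{0}(Y)$ denotes the locus of $q$ for which $x$ is a base point of $|K_{X}+2h^{*}\Theta+h^{*}q|$, you have shown $B_{x}$ is a proper closed subset, and replacing $2\Theta$ by a translate only shifts $B_{x}$ by an element of $\operatorname{Pic}^{0}(Y)$; it gives no information about whether $0\in B_{x}$. Continuous global generation of $h_{*}\omega_{X}\otimes 2\Theta$ alone is not enough here---you need honest global generation, or an extra input.

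The paper supplies that extra input by \emph{splitting} $2\Theta=\Theta+\Theta$ and using the second copy to cancel the twist by multiplication of sections. Concretely: Koll\'ar's vanishing gives $H^{i}(Y,h_{*}\omega_{X}\otimes\Theta\otimes p)=0$ for all $i>0$ and all $p$, so already $h_{*}\omega_{X}\otimes\Theta$ is $IT_{0}$ (no need for Hacon's $GV$ result plus a twist), hence continuously globally generated; together with $|K_{F}|$ base point free this yields that $|K_{X}+h^{*}\Theta+h^{*}p|$ has no base point on $F$ for general $p$. Separately, $\Theta$ itself is $IT_{0}$ on $Y$, so $|\Theta\otimes p^{\vee}|$ is base point free for general $p$, and since $h^{*}\Theta$ is trivial on fibers, $|h^{*}\Theta+h^{*}p^{\vee}|$ is base point free. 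Multiplying a section of each gives a section of $K_{X}+2h^{*}\Theta$ not vanishing at $x$. This is exactly the mechanism used in the proofs of Theorems \ref{maintheorem1} and \ref{supplement}, and it is what the coefficient $2$ is for.
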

\begin{proof}
From the hypothesis, $|K_{F}|$ is basepoint-free. In particular,
$$h^{0}(F, \omega_{F}) \neq 0\text{.}$$
Thus, $h_{*}\mathcal{O}_{X}(K_{X})$ is a nonzero coherent sheaf.
   \par  By Kollár's vanishing theorem (\cite[Theorem 2.1]{kollarhigherdirect}), we have $$H^{i}(Y, h_{*}\mathcal{O}_{X}(K_{X}) \otimes \mathcal{O}_{Y}(\Theta) \otimes p) = 0 \hspace{0.2cm} \text{for all }  i \geq 1 \hspace{0.2cm} 
    \text{and for all }  p \in \operatorname{Pic}^{0}(Y). $$ Thus, $h_{*}\mathcal{O}_{X}(K_{X}) \otimes \mathcal{O}_{Y}(\Theta)$ is a nonzero coherent sheaf satisfies $IT$ with index $0$. Therefore, it is continuously globally generated. By assumption, $|K_{F}|$ is basepoint-free. Thus, applying Lemma \ref{lemma1}, we deduce that $$|K_{X} + h^{*}\Theta + h^{*}p|$$ has no basepoint supported on $F$ for general $p \in \operatorname{Pic}^{0}(Y)$. 
\par Moreover, since $\Theta$ is an ample divisor on $Y$, it satisfies $IT$ with index $0$. Furthermore, $h^{*}\Theta$ is trivial when  restricted to any fiber. Thus, applying Lemma \ref{lemma1} again, we deduce that  $$|h^{*}(\Theta + p)|$$ is basepoint-free for general $p \in \operatorname{Pic}^{0}(Y)$. 
\par Finally, applying Proposition \ref{newpropo} to $|K_{X} + h^{*}\Theta + h^{*}p|$ and $|h^{*}(\Theta + p)|$, we conclude that $$|K_{X} + 2 h^{*}\Theta|$$ has no basepoints supported on $F$.
\end{proof}
\par In the next theorem, we consider the Albanese map and  remove  condition $(*)$ from Theorem \ref{maintheorem1}. We observe that if we assume the adjoint linear system of a general fiber is basepoint-free, then the linear system defined by the adjoint canonical bundle twisted with an ample line bundle from the base has no basepoints supported on a general fiber.

\begin{theorem}\label{supplement3}
    Assume that the Albanese map $\operatorname{alb}: X \to \operatorname{Alb}(X)$ is a surjective morphism with connected fibers, and let $F$ be a general fiber. Let $D$ be a nef and big divisor on $X$, and $\Theta$ an ample divisor on $\operatorname{Alb}(X)$. If there exists an integer $c > 0$  such that $|K_{F} + mD_{|_{F}}|$ is basepoint-free on $F$ for all $m \geq c$, then $|K_{X} + mD + \operatorname{alb}^{*}\Theta|$ has no basepoints supported on a general fiber $F$ for all $m \geq c$. 
\end{theorem}
\begin{proof}
From the hypothesis, $|K_{F}+ mD_{|_{F}}|$ is basepoint-free for all $m \geq c$. In particular,
$$h^{0}(F, \omega_{F} \otimes \mathcal{O}_{F}(mD)) \neq 0 \hspace{0.2cm} \text{for all }  m \geq c \text{.}$$
Thus, $\operatorname{alb}_{*}\mathcal{O}_{X}(K_{X} +mD)$ is a nonzero coherent sheaf for all $m \geq c$. 
\par Applying Lemma \ref{lemma3}, it follows that  $\operatorname{alb}_{*}\mathcal{O}_{X}(K_{X} +mD)$ satisfies $IT$ with index $0$. By Remark \ref{remarkproperties} and Proposition \ref{propoMregularity}, we deduce that $\operatorname{alb}_{*}\mathcal{O}_{X}(K_{X} +mD)$ is continuously globally generated. Thus, applying Lemma \ref{lemma1}, we conclude that $$|K_{X} + mD + \operatorname{alb}^{*}p|$$ has no  basepoint supported on $F$ for all $m \geq c$ and for general $p \in \operatorname{Pic}^{0}(\operatorname{Alb}(X))$. 
\par Applying Lemma \ref{lemma1} to $\operatorname{alb}^{*}(\Theta)$, we obtain that $$|\operatorname{alb}^{*}(\Theta + p)|$$ is basepoint-free for general $p \in \operatorname{Pic}^{0}(\operatorname{Alb}(X))$. 
\par Finally, applying Proposition \ref{newpropo}, we conclude that
$$|K_{X} + mD + \operatorname{alb}^{*}\Theta|$$ has no  basepoints supported on $F$ for all $m \geq c$.
\end{proof}
\section{Basepoint-Freeness of Adjoint Series for Varieties of Maximal Albanese Dimension}
In what follows, we present some results on the basepoint-freeness of linear series, assuming that $X$ is an irregular variety of dimension $n$ with maximal Albanese dimension, that is, $\alpha(X) = n$. Related results on basepoint-freeness, in the setting of varieties whose Albanese morphism is finite, were obtained in \cite[Theorem 5.1]{popapareschi1}. First, we recall the definition of the exceptional set of $\operatorname{alb}$.
\begin{defn}
Let $X$ be an irregular variety of maximal Albanese dimension. Let $X \xrightarrow{f} Z \xrightarrow{u} \operatorname{alb}(X) \subseteq \operatorname{Alb}(X)$ be the Stein factorization of $\operatorname{alb}$. The exceptional set of $\operatorname{alb}$ and $f$ is the subset of $X$ where the morphism $\operatorname{alb}$ is not finite.
\end{defn}
\begin{theorem}\label{mainresult3}
    Let $X$ be an irregular  variety of maximal Albanese dimension, that is $\alpha(X) = n$, and let $D$ be a nef and big divisor on $X$ such that $nD - K_{X}$ is nef and big, or $nD$ is continuously globally generated. Then $|K_{X} + mD|$ is basepoint-free outside the exceptional set of $\operatorname{alb}$ for all $m \geq n+1$. 
\end{theorem}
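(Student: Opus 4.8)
The plan is to use the fact that, for a variety of maximal Albanese dimension, the Albanese map $\operatorname{alb}\colon X \to \operatorname{Alb}(X)$ is generically finite onto its image, together with the generic vanishing machinery recalled in Section~2. First I would show that $\operatorname{alb}_{*}\mathcal{O}_{X}(K_{X} + mD)$ is a nonzero coherent sheaf on $\operatorname{Alb}(X)$ satisfying $IT$ with index $0$ for every $m \geq n+1$. The key input is the relative Kawamata–Viehweg vanishing theorem applied to the morphism $\operatorname{alb}$: since $mD - K_{X} = (m-1)D + (D - K_{X})$ is nef and big (here we use both the nef-and-bigness of $D$ and the hypothesis that $nD - K_{X}$ is nef and big, noting $m \geq n+1 \geq n$ so $(m-1)D \geq (n-1)D$ and one combines $(n-1)D + (nD - K_X)$ suitably — or more simply, since $D$ nef and big and $nD - K_X$ nef and big, any $mD - K_X$ with $m \geq n$ is nef and big), we get that the higher direct images $R^{i}\operatorname{alb}_{*}\mathcal{O}_{X}(K_{X}+mD)$ vanish for $i > 0$, so by a Leray spectral sequence argument and the Kodaira-type vanishing on the abelian variety (or directly from the Kawamata–Viehweg vanishing with the twist by a general $p \in \operatorname{Pic}^0$, since $mD - K_X + \operatorname{alb}^* p$ is still nef and big) one obtains $H^{i}(\operatorname{Alb}(X), \operatorname{alb}_{*}\mathcal{O}_{X}(K_{X}+mD) \otimes p) = 0$ for all $i>0$ and all $p \in \operatorname{Pic}^0(\operatorname{Alb}(X))$. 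This is precisely the $IT_0$ condition of Definition~\ref{itdefn}.

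Next I would check that this pushforward is nonzero: because $\operatorname{alb}$ is generically finite onto $\operatorname{alb}(X)$ and $K_X + mD$ is big (again since $mD$ dominates $K_X$ by the hypothesis for $m \geq n+1$), we have $h^{0}(X, K_{X}+mD) > 0$, and this equals $h^{0}(\operatorname{Alb}(X), \operatorname{alb}_{*}\mathcal{O}_{X}(K_{X}+mD))$, so the sheaf is nonzero. By Remark~\ref{remarkproperties}, an $IT_0$ sheaf is $M$-regular, hence by Proposition~\ref{propoMregularity} it is continuously globally generated, and by Proposition~\ref{haconchenproposition} it has no essential base points.

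Finally I would translate the absence of essential base points on $\operatorname{Alb}(X)$ back to generic base-point-freeness of $|K_X + mD|$ on $X$. Over the open locus $U \subseteq X$ where $\operatorname{alb}$ is an isomorphism onto its image (which is nonempty and dense since $\alpha(X) = n$), a base point $x \in U$ of $|K_X + mD|$ would produce a surjection $\operatorname{alb}_{*}\mathcal{O}_{X}(K_X+mD) \to \mathbb{C}(\operatorname{alb}(x))$ on which every twisted section $H^0(\operatorname{Alb}(X), \operatorname{alb}_{*}\mathcal{O}_X(K_X+mD)\otimes p)$ vanishes — contradicting the no-essential-base-point property, since for any such $p$ the section extends (up to twist) to a section of $K_X + mD + \operatorname{alb}^* p$, and on the isomorphism locus $\operatorname{alb}^* p$ is locally trivial near $x$, so nonvanishing at $\operatorname{alb}(x)$ downstairs forces nonvanishing at $x$ upstairs. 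Hence no point of $U$ is a base point, i.e. $|K_X + mD|$ is generically base point free. The main obstacle I expect is the bookkeeping in the last step: carefully identifying the restriction of a section of $\operatorname{alb}_*\mathcal{O}_X(K_X+mD) \otimes p$ with an honest section near $x$ that separates $x$, which requires that $p$ be chosen general (so that $h^0$ is computed correctly and the evaluation is nonzero) and that one stays on the locus where the Albanese map is an embedding — essentially a relative version of Proposition~\ref{premierproposition} combined with base change along the generically finite map.
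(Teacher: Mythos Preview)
Your argument has a genuine gap in the final step. Showing that $\operatorname{alb}_{*}\mathcal{O}_{X}(K_{X}+mD)$ satisfies $IT$ with index $0$ (which you do correctly) only yields continuous global generation, and hence the absence of \emph{essential} base points: for each $x$ in the finite locus of $\operatorname{alb}$ there exists \emph{some} $p\in\operatorname{Pic}^{0}(\operatorname{Alb}(X))$ and a section of $|K_{X}+mD+\operatorname{alb}^{*}p|$ not vanishing at $x$. This does \emph{not} say anything about $|K_{X}+mD|$ itself, which corresponds to the single choice $p=0$. Your sentence ``a base point $x\in U$ of $|K_{X}+mD|$ would produce a surjection \dots\ on which every twisted section \dots\ vanishes'' is where the logic breaks: a base point of $|K_{X}+mD|$ only forces the evaluation map to vanish for $p=0$, not for all $p$. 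A concrete model for the failure is a degree-one line bundle $\mathcal{O}_{C}(q)$ on an elliptic curve $C$: it is $IT_{0}$ and continuously globally generated, yet $|\mathcal{O}_{C}(q)|$ has a base point at $q$.

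The missing idea, which the paper supplies, is a splitting trick that cancels the twist. One writes $K_{X}+mD=(K_{X}+D)+(m-1)D$ with $m-1\geq n$. Kawamata--Viehweg vanishing (using that $D$ is nef and big) shows $\operatorname{alb}_{*}(K_{X}+D)$ is $IT_{0}$, so $|K_{X}+D+\operatorname{alb}^{*}p|$ has no base point on the complement of the exceptional locus of the Stein factorization $f\colon X\to Z$, for general $p$. Separately, writing $(m-1)D=K_{X}+((m-1)D-K_{X})$ and using the hypothesis that $nD-K_{X}$ is nef and big, one gets that $\operatorname{alb}_{*}((m-1)D)$ is $IT_{0}$ for $m-1\geq n$, hence $|(m-1)D+\operatorname{alb}^{*}p^{\vee}|$ is likewise base point free off the exceptional locus for general $p$. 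Multiplying a section of the first system by a section of the second yields a section of $|K_{X}+mD|$ (the twists by $p$ and $p^{\vee}$ cancel) that is nonvanishing at any prescribed point of the open locus. This is precisely where the hypothesis on $nD-K_{X}$ is used; in your approach that hypothesis plays no essential role, which is a symptom that the argument cannot close as written.
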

\begin{proof}Since $X$ is a variety of maximal Albanese dimension, the Albanese map is generically finite. We take the Stein factorization $X \xrightarrow{f} Z \xrightarrow{u} \operatorname{alb}(X) \subseteq \operatorname{Alb}(X)$ of $\operatorname{alb}$, where $f$ is a birational map. Thus, $f_{*}\mathcal{O}_{X}(K_{X} +D)$ is nonzero, and consequently, so is $\operatorname{alb}_{*}\mathcal{O}_{X}(K_{X} + D)$. Thus, $\operatorname{alb}_{*}\mathcal{O}_{X}(K_{X} + D)$ is a nonzero coherent sheaf that satisfies $IT$ with index $0$, by Lemma \ref{lemma3}. Therefore, it is continuously globally generated. By Lemma \ref{lemma2}, this implies that $$|K_{X} + D + \operatorname{alb}^{*}p|$$ has no basepoints outside the exceptional set of $\operatorname{alb}$ for general $p \in \operatorname{Pic}^{0}(\operatorname{Alb}(X))$.

\par By assumption $nD - K_{X}$ is nef and big. Then,  applying Lemma \ref{lemma3}, we deduce that $\operatorname{alb}_{*}\mathcal{O}_{X}(mD)$ is a nonzero coherent sheaf that satisfies  $IT$ with index $0$ for all $m \geq n$. Hence, it is continuously globally generated. Thus, $$|mD + \operatorname{alb}^{*}p|$$ is basepoint-free outside the exceptional set of $\operatorname{alb}$ for all $p \in \operatorname{Pic}^{0}(\operatorname{Alb}(X))$ and for all $m \geq n$, by Lemma \ref{lemma2}. 
\par Finally, applying Proposition \ref{newpropo}, we conclude that $|K_{X} + m^{'}D|$ is basepoint-free outside the exceptional set of $\operatorname{alb}$ for all $m^{'} \geq n+1$. 
\end{proof}
\begin{remark}
    If we assume that the Albanese map is finite in Theorem \ref{mainresult3}, under the same conditions on $D$ and $nD - K_{X}$, then $|K_{X} + mD|$ is basepoint-free on $X$ for all $m \geq n+1$.  
\end{remark}
\begin{corollary}\label{mainresult4}
Let $X$ be a minimal variety of general type with maximal Albanese dimension, $\alpha(X) = n$. Then, $|4K_{X}|$ is basepoint-free outside the exceptional set of $\operatorname{alb}$. 
\end{corollary}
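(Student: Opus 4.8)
My first step would be to apply Theorem \ref{mainresult3} with $D = K_{X}$. Since $X$ is minimal of general type, $K_{X}$ is nef and big, so $D = K_{X}$ is a legitimate choice, and the remaining hypothesis holds too: $nD - K_{X} = (n-1)K_{X}$ is nef and big as soon as $n \geq 2$ (for $n = 1$ the statement is trivial, since on a curve of genus $\geq 2$ even $|2K_{X}|$ is base point free). Theorem \ref{mainresult3} then shows that $|\ell K_{X}| = |K_{X} + (\ell - 1)K_{X}|$ is generically base point free for every $\ell \geq n+2$; in particular this settles the case $\dim X = 2$, where $n+2 = 4$.

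For $\dim X \geq 3$ one must improve the numerical bound from $\ell \geq n+2$ to $\ell = 4$, and here I would work directly over the Albanese image rather than through Theorem \ref{mainresult3}. Because $X$ has maximal Albanese dimension the morphism $\operatorname{alb} : X \to \operatorname{alb}(X)$ is generically finite, so it suffices to show that $\operatorname{alb}_{*}(\omega_{X}^{\otimes 4})$ is $M$-regular on $\operatorname{Alb}(X)$: by Proposition \ref{propoMregularity} it is then continuously globally generated, by Proposition \ref{premierproposition} it has no essential base points, and pulling sections back along the generically finite morphism $\operatorname{alb}$ one concludes that the base locus of $|4K_{X}|$ is contained in a proper closed subset of $X$ (indeed in the exceptional locus of $\operatorname{alb}$). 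To get the $M$-regularity I would write $4K_{X} = K_{X} + 3K_{X}$ with $3K_{X}$ nef and big; Kawamata--Viehweg vanishing applied to $K_{X} + (3K_{X} + \operatorname{alb}^{*}p)$ gives $H^{i}(X, \omega_{X}^{\otimes 4} \otimes \operatorname{alb}^{*}p) = 0$ for all $i > 0$ and all $p \in \operatorname{Pic}^{0}(\operatorname{Alb}(X))$, so the cohomology support loci $V^{i}(\operatorname{alb}_{*}(\omega_{X}^{\otimes 4}))$ with $i \geq 1$ receive contributions only from the higher direct images $R^{j}\operatorname{alb}_{*}(\omega_{X}^{\otimes 4})$, which are supported on the proper locus over which $\operatorname{alb}$ has positive-dimensional fibers; combining this with the generic vanishing theory for $\operatorname{alb}_{*}\omega_{X}$ — a $GV$-sheaf whose locus $V^{0}$ generates $\operatorname{Pic}^{0}(X)$ because $X$ is of general type — should yield $\operatorname{codim}_{\operatorname{Pic}^{0}(X)} V^{i}(\operatorname{alb}_{*}(\omega_{X}^{\otimes 4})) > i$ for every $i \geq 1$.

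The hard part will be exactly this $M$-regularity statement: the higher direct images $R^{j}\operatorname{alb}_{*}(\omega_{X}^{\otimes 4})$ live along the locus where $\operatorname{alb}$ fails to be finite, and one must show that the extra positivity carried by $3K_{X}$ prevents them from dropping the codimension of some $V^{i}$ down to $i$; a further technical nuisance is that $K_{X}$ is only nef and big rather than ample, so the vanishing should be run with an auxiliary boundary divisor (multiplier ideal) in place of plain Kodaira vanishing. If external input is allowed, the conclusion also follows a fortiori from the theorem of Jiang--Lahoz--Tirabassi that $|4K_{X}|$ is birational onto its image for every smooth projective variety of maximal Albanese dimension and general type; but carrying the argument out inside the framework of this paper, via the $M$-regularity of $\operatorname{alb}_{*}(\omega_{X}^{\otimes 4})$, is the more natural route.
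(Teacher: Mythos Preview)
Your argument has a genuine gap at the step ``$\operatorname{alb}_{*}(\omega_X^{\otimes 4})$ is $M$-regular (or CGG, or has no essential base points), hence $|4K_X|$ is generically base point free.'' These properties only tell you that for every $x$ there is \emph{some} $p\in\operatorname{Pic}^0$ with a section of $4K_X+\operatorname{alb}^*p$ not vanishing at $x$; they say nothing about the untwisted system $|4K_X|$ itself. The simplest counterexample is a principal polarization $\Theta$ on an abelian variety: it satisfies $IT_0$ (so is $M$-regular, CGG, and has no essential base points), yet $|\Theta|$ consists of a single divisor and is nowhere base point free. So showing $\operatorname{alb}_*(\omega_X^{\otimes 4})$ is $M$-regular is not enough.

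The paper's proof avoids this by the same ``twist and multiply'' trick used in Theorems~\ref{maintheorem1}, \ref{supplement}, and \ref{mainresult3}. Since $K_X$ is nef and big, the identical Kawamata--Viehweg plus relative Kawamata--Viehweg plus Leray argument (exactly as in the proof of Theorem~\ref{mainresult3}) shows that $\operatorname{alb}_*(2K_X)$ is a nonzero sheaf satisfying $IT_0$; hence $|2K_X+\operatorname{alb}^*p|$ has no base point outside the exceptional locus of the Stein factorization $f$, for $p\in\operatorname{Pic}^0(\operatorname{Alb}(X))$ general. Applying this to both $p$ and $p^\vee$ and multiplying the two sections gives a section of $|4K_X|$ not vanishing at the given point. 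This works uniformly in all dimensions, so your separate treatment of $n=2$ via Theorem~\ref{mainresult3} is unnecessary; and your concern about the higher direct images $R^j\operatorname{alb}_*(\omega_X^{\otimes 4})$ is also misplaced, since the relative Kawamata--Viehweg vanishing used throughout the paper kills them outright rather than merely controlling their support.
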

\begin{proof}
We take the Stein factorization $X \xrightarrow{f} Z \xrightarrow{u} \operatorname{alb}(X) \subseteq \operatorname{Alb}(X)$ of $\operatorname{alb}$, where $f$ is a birational map. Thus, $f_{*}\mathcal{O}_{X}(2K_{X})$ is nonzero, and consequently, so is $\operatorname{alb}_{*}\mathcal{O}_{X}(2K_{X})$.
\par Since $K_{X}$ is nef and big, we apply Lemma \ref{lemma3} to obtain that
$\operatorname{alb}_{*}\mathcal{O}_{X}(2K_{X})$ is a nonzero coherent sheaf that satisfies \emph{IT} with index $0$. Consequently, it is continuously globally generated. Thus,
\[
|2K_{X} + \operatorname{alb}^{*}p|
\]
is basepoint-free outside the exceptional set of $\operatorname{alb}$ for all $p \in \operatorname{Pic}^{0}(\operatorname{Alb}(X))$. Using this fact twice, we conclude that  $|4K_{X}|$ is basepoint-free outside the exceptional set of $\operatorname{alb}$ by Proposition \ref{newpropo}.
\end{proof}
\begin{remark}
This last corollary is not sharp. Indeed, in \cite{trio}, the authors proved that $|3K_{X}|$ is birational under the assumption that $X$ is an irregular variety of general type and of maximal Albanese dimension. 
\end{remark}
The next proposition is an analogue of Proposition \ref{kollarvanishing} for varieties admitting a finite morphism to an abelian variety.
\begin{proposition}
   Let $h: X \to Y$ be a surjective finite morphism  from $X$ to an abelian variety $Y$. Let $\Theta$ be an ample divisor on $Y$. Then $|K_{X} + 2 h^{*}(\Theta)|$ is basepoint-free on $X$. 
\end{proposition}
\begin{proof}
 \par  By Kollár's vanishing theorem (\cite[Theorem 2.1]{kollarhigherdirect}), we have $$H^{i}(Y, h_{*}\mathcal{O}_{X}(K_{X}) \otimes \mathcal{O}_{Y}(\Theta) \otimes p) = 0 \hspace{0.2cm} \text{for all }  i \geq 1 \hspace{0.2cm} 
    \text{and for all }  p \in \operatorname{Pic}^{0}(Y). $$ Thus, $h_{*}\mathcal{O}_{X}(K_{X}) \otimes \mathcal{O}_{Y}(\Theta)$ is a nonzero coherent sheaf that satisfies $IT$ with index $0$. Therefore, it is continuously globally generated. By Lemma \ref{lemma2}, it follows that
    $$|K_{X} + h^{*}\Theta + h^{*}p|$$ is basepoint-free for general $p \in \operatorname{Pic}^{0}(Y)$. 
\par Since $\Theta$ is an ample divisor on $Y$, it satisfies $IT$ with index $0$.  Thus,  $$|h^{*}(\Theta + p)|$$ is basepoint-free for general $p \in \operatorname{Pic}^{0}(Y)$, by Lemma \ref{lemma2}. Applying Proposition \ref{newpropo}, we conclude that $$|K_{X} + 2 h^{*}\Theta|$$ is basepoint-free.
\end{proof}
\bmhead{Acknowledgements}I am very grateful to  my advisors Steven Lu and Nathan Grieve for their constant help, invaluable advice and financial support. I would like to thank the anonymous referee and Sándor Kovács for their very careful reading of this work and for providing numerous comments and helpful suggestions.
\section*{Data availability}
Data sharing not applicable to this article as no data sets were generated or analyzed during the current study.
\section*{Declarations}
\textbf{Statement about conﬂict of interest} The author states that there is no conﬂict of interest.
\bibliography{Bibliography.bib}
\end{document}